\newtheorem{thm}{Theorem}[section]
\newtheorem{lem}[thm]{Lemma}
\newtheorem{cor}[thm]{Corollary}
\newtheorem{defn}[thm]{Definition}
\newtheorem{remark}{Remark}
\newcommand{\abs}[1]{\left\vert#1\right\vert}
\newcommand{\R}{\mathbb{R}}
\newcommand{\Sp}{\mathbb{S}}
\newcommand{\set}[1]{\left\{#1\right\}}
\DeclareMathOperator{\diam}{Diam}
\newcommand{\Sc}{\mathcal{S}} %Mathcal S.
\title{On the Stability of Llarull's Theorem in Dimension Three}
\author{Brian Allen}
\address[Brian Allen]{Lehman College, CUNY}
\email{brianallenmath@gmail.com}
\author{Edward Bryden$^\dagger$}
\address[Edward Bryden]{Universiteit Antwerpen}
\email{etbryden@gmail.com}
\thanks{$^\dagger$funded by FWO grant 12F0223N}
\author{Demetre Kazaras}
\address[Demetre Kazaras]{Duke University}
\email{demetre.kazaras@duke.edu}
\begin{document}

\maketitle
    
\begin{abstract}
Llarull's Theorem states that any Riemannian metric on the $n$-sphere which has scalar curv{\-}ature greater than or equal to $n(n-1)$, and whose distance function is bounded below by the unit sphere's, is isometric to the unit sphere.  Gromov later posed the {\emph{Spherical Stability Problem}}, which probes the flexibility of this fact. We give a resolution to this problem in dimension $3$. Informally, the main result asserts that a sequence of Riemannian $3$-spheres whose distance functions are bounded below by the unit sphere's with uniformly bounded Cheeger isoperimetric constant and scalar curvatures tending to $6$ must approach the round $3$-sphere in the volume preserving Sormani-Wenger Intrinsic Flat sense. The argument is based on a proof of Llarull's Theorem due to Hirsch-Kazaras-Khuri-Zhang using spacetime harmonic functions.
\end{abstract}
    
\section{Introduction}

In a pair of influential works \cites{GL1,GL2} Gromov-Lawson developed a rich theory for Riemannian manifolds satisfying a lower scalar curvature bound. Llarull studied an extreme instance of one of these ideas, and proved a remarkable characterization of the round sphere \cite{Llarull}, now known as Llarull's Theorem. The fundamental consequence of this theorem is the following comparison principle and rigidity statement: 
\vspace{.1in}
{\begin{adjustwidth}{30pt}{30pt}
    {\em {\small{Any change to a round sphere's Riemannian structure which increases
its scalar curvature must decrease some distances. 
Moreover, if a Riemannian $n$-sphere has scalar curvature at least $n(n-1)$ and its distance function is bounded below by the unit sphere's, then it is isometric to the unit sphere.}}}
\end{adjustwidth}}
\vspace{.1in}
This fact belongs to a varied collection of rigidity statements satisfied by Riemann{\-}ian manifolds with lower scalar curvature bounds. Aiming to further probe the structure of such manifolds, there is a developing interest in understanding the extent to which these rigid properties are {\em{stable}}. The survey articles \cites{GromovFour, SormaniIAS} describe details on this program and relevant conjectures. We apply this line of questioning to Llarull's Theorem: is a Riemannian $n$-sphere with larger distances compared to the unit sphere and scalar curvature almost $n(n-1)$ close to the unit sphere in some geometric sense? This was named the {\emph{Spherical Stability Problem}} by Gromov \cites{GromovBoundaries}. The main result of the present work, Theorem \ref{thm-Main}, provides a resolution in dimension $3$.

In considering this problem, one encounters the phenomena of {\emph{splines}} and the issue of {\emph{other worlds}}, also known as {\emph{bubbles}} or {\emph{bags of gold}}. From ideas dating back to Gromov-Lawson \cite{GL1} and Schoen-Yau \cite{Schoen-Yau-Structure}, one expects examples of manifolds nearly satisfying the hypotheses of Llarull's rigidity theorem to have significant regions resembling the product of a small $2$-sphere with an interval (splines) and regions of almost completely uncontrolled geometry (other worlds) hidden behind a small neck-like region, see Figures \ref{fig:cheegerpic} and \ref{fig-art}. Rigorous examples of this type have been constructed by Sweeney \cite{Sweeney}. With this in mind, we work with a topology on the space of Riemannian $3$-spheres which is insensitive to splines and consider a class of geometries which does not support extreme bubbling phenomena, but is not unnecessarily constrained. The topology we consider is generated by the {\emph{Sormani-Wenger Intrinsic Flat distance}} \cite{Sormani-Wenger}, discussed below Theorem \ref{thm-Main}. First, we specify the class of $3$-spheres considered.
 
\begin{defn}\label{defn:FamilyOfMetrics}
        Given $V,D,\overline{m},\Lambda>0$, let $\Sc(V,D,\overline{m},\Lambda)$ denote the class of Riemannian $3$-spheres $(\Sp^3,g)$ which satisfy a
        \begin{enumerate}[label=(\roman*)]
            \item distance lower bound: $g\geq g_{\Sp^3}$,
            \item volume upper bound: $\abs{\Sp^3}_{g}\leq V$,
            \item diameter bound: $\diam(\Sp^3,g)\le D$,
            \item scalar curvature bound: $\left\|\left(6-R_{g}\right)^{+}\right\|_{L^{2}(g)}^{1/2}\leq \overline{m}$,
            \item Cheeger constant lower bound: $IN_{1}(g)\geq\Lambda$.
        \end{enumerate}
        In the above, $R_g$ denotes the scalar curvature of $(\Sp^3,g)$ and $IN_1(g)$ denotes the Cheeger constant given by $\inf_{\Omega\subset\Sp^3}\tfrac{|\partial\Omega|_g}{\min(|\Omega|_g,|\Sp^3\setminus\Omega|_g)}$.
    \end{defn}
    \begin{figure}[h]
    \centering
    \includegraphics[scale=.2]{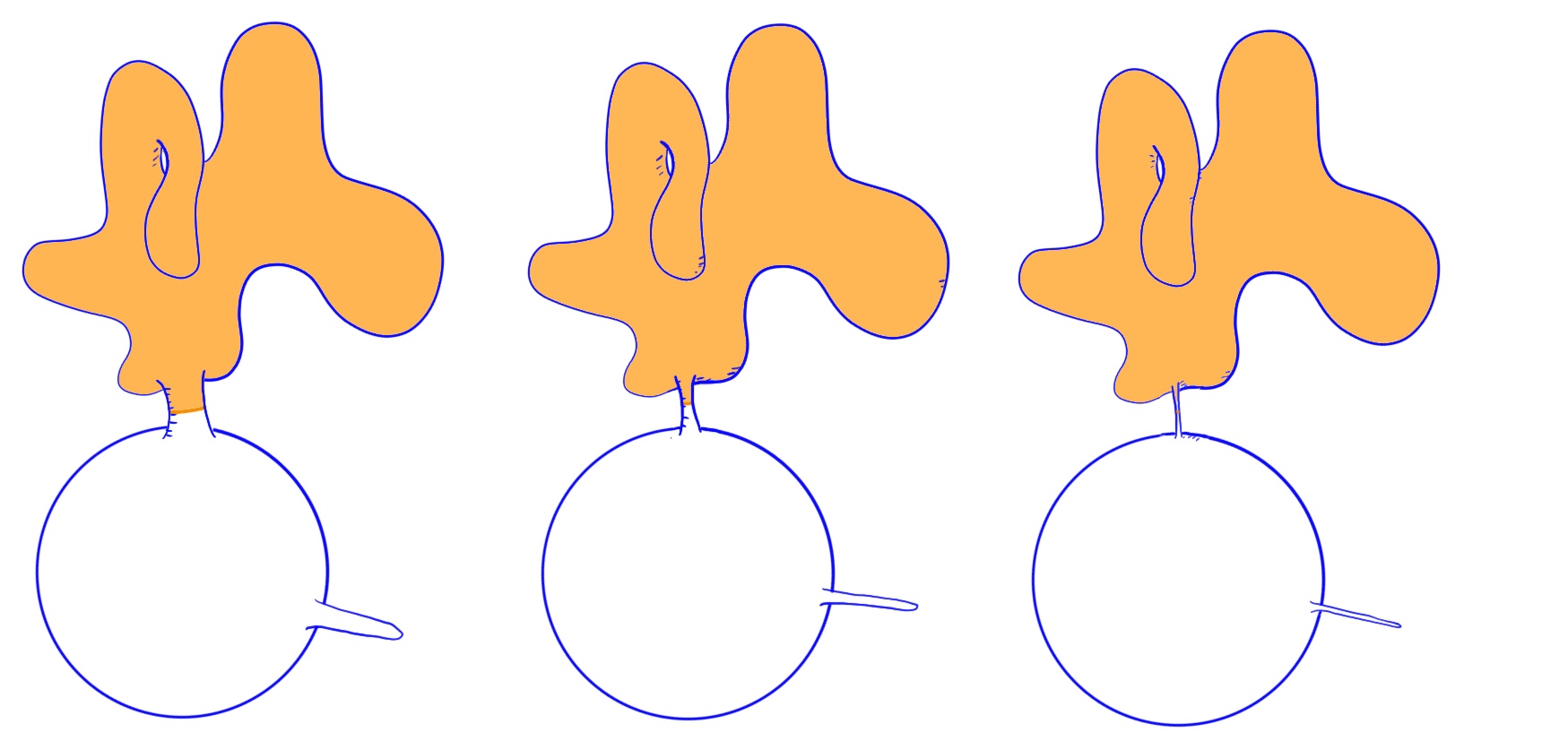}
       \begin{picture}(0,0)
\put(-20,100){{\Large{...}}}
\put(-150,160){{\large{$\Omega$}}}
\put(-155,158){\vector(-4,-1){140}}
\put(-150,153){\vector(-1,-1){30}}
\put(-142,153){\vector(1,-1){38}}
\put(-330,35){{\large{$(\Sp^3,g_1)$}}}
\put(-216,35){{\large{$(\Sp^3,g_2)$}}}
\put(-106,35){{\large{$(\Sp^3,g_3)$}}}
\end{picture}
    \caption{A sequence failing the Cheeger constant lower bound, since the orange region $\Omega$ has large volume with a shrinking boundary. Conditions $(i)$-$(iii)$ and smallness of $(6-R_g)^+$ force the smallness of necks and splines, but have almost no impact on the other worlds. }
    \label{fig:cheegerpic}
\end{figure}
We describe how the above conditions shape the class of geometries. First, item $(i)$ is equivalent to a distance lower bound. This and scalar curvature bounds are essential requirements for the Spherical Stability Problem. The volume and diameter restrictions allow us to avoid technical non-compactness issues, and are standard in geometric stabil{\-}ity questions, see \cites{GromovFour, SormaniIAS}. Finally, the Cheeger constant lower bound addresses the issue of other worlds. There exists sequences satisfying $(i)-(iv)$, $R_{g_i}\to6$, and with degenerating Cheeger constants which fail to converge to the unit sphere in the Intrinsic Flat sense, see Figure \ref{fig:cheegerpic}. We emphasize that this last condition is mild in the sense that it does not prohibit the main geometrical challenges of the stability problem, namely the presence of splines. With this class of manifolds defined, we are now able to state the main theorem.
    \begin{thm}\label{thm-Main}
        Let $V,D,\overline{m},\Lambda>0$. If a sequence $\{(\Sp^3,g_i)\}_{i=1}^\infty$ of Riemannian $3$-spheres in $\Sc(V,D,\overline{m},\Lambda)$ satisfies
        \begin{align}
         \left\|\left(6-R_{g_i}\right)^{+}\right\|_{L^{2}(g_i)}^{1/2} \rightarrow 0,
        \end{align}
        then it converges in the volume preserving Sormani-Wenger Intrinsic Flat sense to the unit $3$-sphere:
        \begin{align}
            d_{\mathcal{VF}}((\Sp^3,g_i),(\Sp^3,g_{\mathbb{S}^3}))\rightarrow 0.
        \end{align}
    \end{thm}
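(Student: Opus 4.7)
The plan is to construct, for each sufficiently large $i$, an almost $1$-Lipschitz map of degree one $\Phi_i : (\Sp^3, g_i) \to (\Sp^3, g_{\Sp^3})$ which is nearly volume-preserving off a small ``bad'' set. Combined with the distance lower bound (i), an appropriate \emph{volume above, distance below} (VADB) theorem of Allen--Perales--Sormani type should then yield the desired volume-preserving intrinsic flat convergence.

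The construction of $\Phi_i$ will follow the spacetime harmonic function proof of Llarull's theorem by Hirsch--Kazaras--Khuri--Zhang. I would fix nearly antipodal points $p_i^{\pm} \in \Sp^3$ and, on the twice-punctured manifold, solve a boundary value problem for a spacetime harmonic function $u_i$ modelled on the height coordinate $\cos r$ of the round sphere. Combining $u_i$ with harmonic $1$-forms on its level sets produces a candidate map $\Phi_i$. The key integral identity, obtained by combining Bochner's formula with the spacetime harmonic equation and integrating, takes the schematic form
\begin{equation*}
\int_{\Sp^3} \Bigl( \bigl|\nabla^2 u_i - T_i\bigr|^2 + \text{Ric-defect}\Bigr)\, d\Vol_{g_i} \leq \int_{\Sp^3} (6 - R_{g_i})^{+}\,|\nabla u_i|^2 \, d\Vol_{g_i} + o(1),
\end{equation*}
where $T_i$ denotes the model Hessian of $\cos r$ on $(\Sp^3, g_{\Sp^3})$. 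The scalar curvature hypothesis forces the right hand side to zero.

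The next step is to convert this integral control into quantitative geometric information. Standard elliptic theory together with (ii)--(iii) should yield uniform gradient and Hessian bounds on $u_i$ away from the poles, allowing one to translate the $L^2$ Hessian defect into smallness of $|g_i - \Phi_i^\ast g_{\Sp^3}|$ on a good set $G_i$ whose complement $B_i$ has volume $o(1)$. Here the Cheeger bound (v) is essential: if the level sets of $u_i$ developed thin necks so that most of the volume hid in bubbles, one could exhibit a nearly volume-balanced partition with small boundary, contradicting the uniform lower bound on $\IN_1(g_i)$. Combined with (iv), this forces the level sets of $u_i$ to be almost round $2$-spheres of the expected radii and to foliate $(\Sp^3, g_i)$ with a quantitative warped product structure.

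With $\Phi_i$ and these estimates in hand, I would invoke a VADB-type theorem for volume-preserving intrinsic flat convergence: the hypothesis $g_i \geq g_{\Sp^3}$ ensures $\Phi_i^{-1}$ is distance non-decreasing to leading order, the almost-isometry estimate controls distortion on $G_i$, and the smallness of $\Vol(B_i)$ absorbs the remainder. The main technical obstacle is the middle step: passing from the integrated Bochner identity to pointwise geometric data strong enough for the VADB input. Splines can concentrate the Hessian defect on sets of small but nontrivial volume, the gradient of $u_i$ must be bounded from below away from the poles to control its level sets, and the parametrization of $u_i$ must be synchronized with the round sphere's height function. Managing these issues simultaneously with the bubbling prevention afforded by (v) is where the delicate analysis lies.
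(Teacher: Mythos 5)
Your proposal has the right raw materials --- the Hirsch--Kazaras--Khuri--Zhang spacetime harmonic function, the Bochner-type integral identity controlled by $(6-R_{g_i})^+$, the good set / small bad set decomposition, and the Cheeger bound as a bubbling safeguard --- but it misreads what the Volume Above Distance Below theorem needs as input, and consequently aims at a target that is both unreachable and unnecessary. The Allen--Perales--Sormani theorem the paper applies requires only three things: a uniform diameter bound, the distance-below bound $g_i \ge (1-\tfrac{1}{i})\,g_{\Sp^3}$, and \emph{global volume convergence} $|\Sp^3|_{g_i}\to|\Sp^3|_{g_{\Sp^3}}$. No map $\Phi_i$, no degree-one construction via harmonic $1$-forms, no almost-isometry estimate $|g_i-\Phi_i^*g_{\Sp^3}|\approx 0$ on a good set, no quantitative warped-product structure. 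The paper never builds a map (that spinorial structure belongs to Llarull's original proof, not the spacetime-harmonic route) and never attempts to upgrade the integrated Hessian-defect inequality to pointwise control; the very spline examples you mention are exactly why such an upgrade would fail, and the beauty of the VADB input is that you don't need it.

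The argument you should be aiming for stays entirely at the integral level and targets only a single number, $|\Sp^3|_{g_i}$. From the mass-type identity (whose right side should involve $|\nabla u_i|$, not $|\nabla u_i|^2$) one gets a $W^{1,1}$ bound on the ratio $|\nabla u|/\sin\theta$. The Cheeger bound then enters not through the level-set partition argument you sketch, but through the identity $\IN_1 = SN_1$, yielding a $W^{1,1}$-to-$L^1$ Poincar\'{e} inequality and hence that $|\nabla u|/\sin\theta$ is $L^1$-close to some constant $a(g)$. A separate polar analysis (integrating the Llarull equation against $\csc^{1+\varepsilon}(\theta)$ and controlling spherical averages of $u$ near $\pm p$) shows $a(g)\to 1$. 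Putting these together gives a set on which the $g$-volume form and the round volume form nearly agree, while the complement has small $g$-volume (after a point-picking choice of poles so that $|B^{\Sp}(\pm p,t)|_g$ is small); summing yields the volume convergence directly. The step you flagged as the ``main technical obstacle'' --- passing from integrated to pointwise control --- is not an obstacle to be overcome but a detour to be avoided.
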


\begin{figure}[h] 
   \center{\includegraphics[width=.38\textwidth]{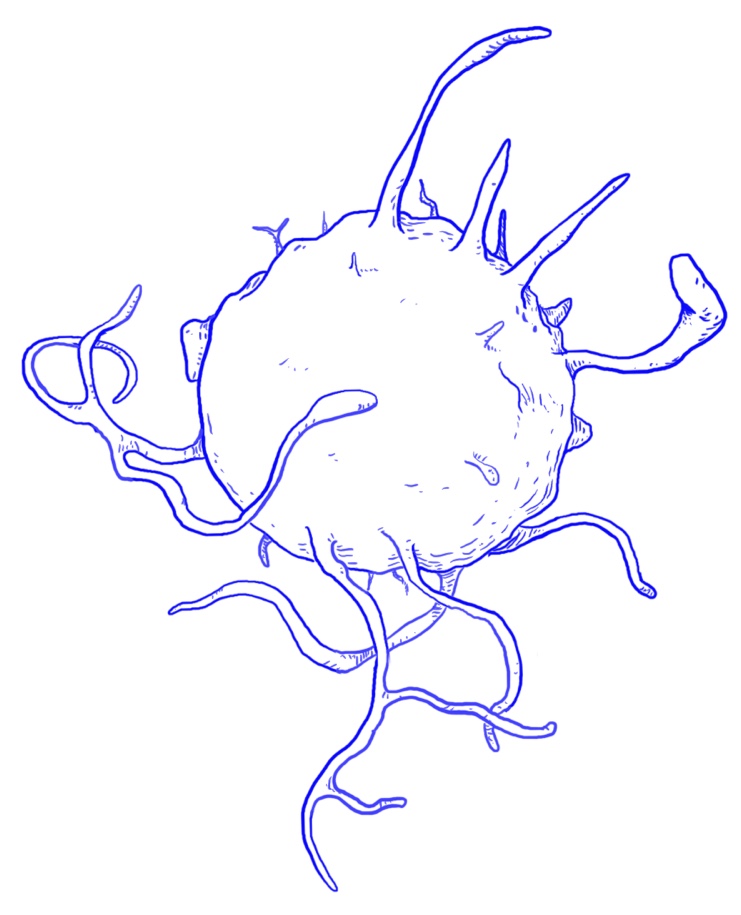}}
\caption{An artistic depiction of a Riemannian $3$-sphere lying far out in a tail of the sequence $(\Sp^3,g_i)$ appearing in Theorem \ref{thm-Main}. The shown tendrils enclose, in an intrinsic sense, little volume and have a negligible impact on its $d_{\mathcal{VF}}$-distance to the round sphere.}
   \label{fig-art}
\end{figure}

We remark that a precise formulation of the Spherical Stability Problem was communicated by Sormani in the work of Sweeney \cite{Sweeney}. In dimension $3$, their conjecture is stated for a class of manifolds similar to the one given in Definition \ref{defn:FamilyOfMetrics}, except that the Cheeger constant lower bound is replaced by a lower bound on the area of minimal surfaces, called a {\emph{MinA}} condition. The MinA condition was first suggested by Sormani \cite[Remark 9.10]{SormaniIAS} as a means for addressing the issue of other worlds by detecting the narrow neck in their boundary. Theorem \ref{thm-Main} suggests that a Cheeger constant lower bound is a possible alternative to consider in other scalar curvature stability problems. A Cheeger constant lower bound may be preferable to the MinA condition due to the fact that $IN_1(g)$ is continuous in, for instance, the $C^0$-topology whereas the smallest minimal surface area is not. In particular, the Cheeger constant allows for arbitrarily small other worlds, whereas the MinA condition does not.

The Sormani-Wenger Intrinsic Flat distance appearing in Theorem \ref{thm-Main} is a geometric measurement of the difference between Riemannian manifolds which is particularly suited for problems involving lower scalar curvature bounds. To describe the prototypical situation, one can construct sequences of Riemannian $3$-spheres with increasingly many splines, bounded volume, and positive scalar curvature. Such a sequence possesses an unbounded number of disjoint metric balls of fixed radius. Due to Gromov's compactness theorem, these examples cannot have a Gromov-Hausdorff limit. However, such sequences are known to converge with respect to the Sormani-Wenger Intrinsic Flat distance. To demonstrate this convergence in Theorem \ref{thm-Main}, we take advantage of the Volume Above Distance Below Theorem of Allen-Perales-Sormani \cite{Allen-Perales-Sormani-VADB}, reviewed in Section \ref{subsec:VADB}. This theorem asserts that a sequence of Riemannian manifolds $\{(M,g_i)\}_{i=1}^\infty$ with bounded diameter, volumes converging to that of a Riemannian manifold $(M,g)$, and distance bounds from below $g_i \ge g$ will converge to $(M,g)$ in the volume preserving Sormani-Wenger Intrinsic Flat sense. In the setting of Llarull's Theorem, we are given that each metric under consideration is bounded from below by the round metric on the three-sphere. Thus, our objective is to establish volume convergence. 

The proof of Theorem \ref{thm-Main} centers on the analysis of spacetime harmonic functions introduced by Hirsch-Kazaras-Khuri in \cite{hirsch-kazaras-khuri}, where an integral identity based on fundamental work of Stern \cite{Stern} was used to study mass in general relativity. These ideas were used by Hirsch-Kazaras-Khuri-Zhang in \cite{Hirsch-Kazaras-Khuri-Zhang} to give a new proof of Llarull's Theorem in dimension $3$. In this particular setting, we call the relevant spacetime harmonic functions {\em{Llarull potentials}}. Llarull potentials solve a quasilinear elliptic equation with coefficients which become singular near the poles, and satisfy an important integral formula relating scalar curvature to $C^2$ information of the Llarull potential. The quantitative nature of this formula is an essential tool for showing stability. 

To demonstrate the necessary volume convergence, we identify regions in $(\Sp^3,g_i)$ with strong pointwise control on its Llarull potential. Leveraging the integral formula mentioned above, we show these regions have volume forms comparable to the round sphere's. The fundamental step is an in-depth analysis of the PDE near its singular poles, showing the well-controlled regions cannot collapse along the sequence. Next, the Cheeger constant lower bound is combined with a strategy used by Dong \cite{Dong-PMT_Stability} and Dong-Song \cite{Dong-Song} to show the well-controlled regions encompass almost all of the sphere far out in the sequence.  

In Section \ref{sect: background}, we review background material on Llarull potentials, Cheeger's constant, the proof of Llarull's Theorem in \cite{Hirsch-Kazaras-Khuri-Zhang}, and the Volume Above Distance Below Theorem. Section \ref{sect: global estimates} contains uniform integral estimates for Llarull potentials. These estimates all follow from standard PDE methods applied to the Llarull potentials and the control  provided by the proof of Llarull's Theorem. Our critical work takes place in Section \ref{sect: polar estimates}, where estimates are established near the poles $\{p,-p\} \in \Sp^3$ for Llarull potentials whose Riemannian metrics are in the class defined in Definition \ref{defn:FamilyOfMetrics}. In Section \ref{sect: VADB convergence}, we give the proof of Theorem \ref{thm-Main} by combining the estimates of the previous section to Theorem \ref{thm-VADB}.

%%%%%%%%%%%%%%%%%%%%%%%%%%%%%%%%%%%%%%%%%%%%%%%%%%%%%%%%%%%%%%%%%
\section{Background}\label{sect: background}
    In this section we review background material and preliminary arguments necessary to understand proofs given in subsequent sections.
        
    {\bf Conventions and notations:} given a point $p\in\Sp^3$, let $\theta_p(x)$ denote the distance function from $p$ to $x$ as measured using the unit round metric. When the point $p$ is clear in context, we simply write $\theta=\theta_p$. We use $B^\Sp(x,r)$ to denote the ball of radius $r$ about $x\in\Sp^3$ in the unit $3$-sphere. Notice that $\theta^{-1}(r)=\partial B^\Sp(p,r)$ and we will use these notations interchangeably. Given a Riemannian metric $g$ and a set $A$, we use $|A|_g$ to denote either the volume or area of $A$, depending on the context, as measured by $g$. Given a tensor $T$, we write $|T|$ for its norm with respect to $g$. Unless mentioned explicitly, we fix the parameters $V,D,\Lambda,\overline{m}>0$ for the remainder of the paper.

\subsection{Llarull potentials on $\Sp^3$}
    In what follows, we gather together and make explicit some results proven in \cite[Section 7]{Hirsch-Kazaras-Khuri-Zhang}.
    \begin{lem}\label{lem:existence}
        Given a Riemannian $3$-sphere $(\Sp^3,g)$ satisfying $g\geq g_{\Sp^3}$ and antipodal points $\pm p$, there exists a function $u\in C^{2,\alpha}(\Sp^3\setminus \{p,-p\})\cap \mathrm{Lip}(\Sp^3)$ so that 
        \begin{enumerate}[label=(\roman*)]
            \item $\Delta_g u+3\cot(\theta)|\nabla u|=0$ weakly across $\Sp^3$ and $u(\pm p)=\pm1$,
            \item  the following integral inequalities hold:
            \begin{equation}\label{eq:csc2_norm_grad_u}
                \int_{\Sp^3}\csc^2(\theta)\abs{\nabla u}dV_{g}\leq8\pi+\frac{1}{2}\int_{\Sp^3}\left(6-R_{g}\right)^{+}\abs{\nabla u}dV_{g},
            \end{equation}
            \begin{equation}\label{eq:csc_grad_u-grad_theta_alignment}
                \int_{\Sp^3}\csc^{2}(\theta)\left(\abs{\nabla u}+g(\nabla u,\nabla\theta)\right)dV_{g}\leq\frac14 \int_{\Sp^3}\left(6-R_{g}\right)^{+}\abs{\nabla u}dV_{g},
            \end{equation}
            \begin{equation}\label{eq:mass_formula}
                \int_{\Sp^3}\frac{|\nabla^2u+\cot(\theta)|\nabla u| g|^2}{|\nabla u|}dV_g \le \int_{\Sp^3}(6-R_g)^{+}|\nabla u| dV_g.
            \end{equation}
        \end{enumerate}
    \end{lem}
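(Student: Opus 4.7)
The strategy is to construct the Llarull potential by standard elliptic regularization and derive the integral inequalities via an integrated Bochner identity, all following the spacetime harmonic function framework of \cite{hirsch-kazaras-khuri,Hirsch-Kazaras-Khuri-Zhang}.

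For existence and regularity, I would first address the two sources of degeneracy in the equation $\Delta_g u + 3\cot(\theta)|\nabla u| = 0$: the vanishing gradient term and the singular coefficient $\cot(\theta)$ at the poles. Replace $|\nabla u|$ by $\sqrt{|\nabla u|^2 + \tau^2}$ and cut $\cot(\theta)$ off near the poles, then solve the resulting uniformly elliptic quasilinear Dirichlet problem on the annular region $\Sp^3\setminus(B^\Sp(p,\delta)\cup B^\Sp(-p,\delta))$ with boundary data $\pm 1$, obtaining a smooth solution $u_{\tau,\delta}$ via Leray--Schauder theory. A Lipschitz bound uniform in $\tau,\delta$ can be derived by barrier arguments: the round-sphere potential $\cos(\theta)$ and its scaled variants serve as explicit models, and the comparison $g \geq g_{\Sp^3}$ allows these barriers to be deployed against $u_{\tau,\delta}$ near the poles. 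Diagonal extraction together with interior Schauder estimates on $\Sp^3 \setminus \{\pm p\}$ then yields $u \in C^{2,\alpha}(\Sp^3 \setminus \{\pm p\}) \cap \mathrm{Lip}(\Sp^3)$ satisfying (i).

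The integral inequalities all stem from Bochner's identity applied to $|\nabla u|$, combined with the PDE $\Delta_g u = -3\cot(\theta)|\nabla u|$. The central observation for \eqref{eq:mass_formula} is that, because of this PDE, the tensor $\nabla^2 u + \cot(\theta)|\nabla u|\, g$ is exactly the trace-free Hessian in dimension three, so the orthogonal decomposition
\[
|\nabla^2 u|^2 = \bigl|\nabla^2 u + \cot(\theta)|\nabla u|\, g\bigr|^2 + 3\cot^2(\theta)|\nabla u|^2
\]
isolates the trace-free piece. Substituting into Bochner, dividing by $|\nabla u|$, decomposing Ricci into its trace ($R_g/3$) and trace-free parts to extract the scalar curvature, and integrating over the punctured sphere $\Sp^3 \setminus (B^\Sp(p,\epsilon)\cup B^\Sp(-p,\epsilon))$ gives \eqref{eq:mass_formula} as $\epsilon \to 0$. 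For \eqref{eq:csc2_norm_grad_u} and \eqref{eq:csc_grad_u-grad_theta_alignment}, I would test the equation against the weight $\csc^2(\theta)$, integrating by parts while using the round-sphere identity $\Delta_{g_{\Sp^3}}\theta = 2\cot(\theta)$ along with $g \geq g_{\Sp^3}$. The boundary contributions at each pole asymptote to $4\pi$, the area of a unit $2$-sphere, which accounts for the constant $8\pi$ in \eqref{eq:csc2_norm_grad_u}.

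The principal obstacle is controlling the boundary terms near the poles as $\epsilon \to 0$. The weights $\csc^2(\theta)$ and $|\nabla u|^{-1}$ each introduce potentially singular behavior that must be balanced against the shrinking boundary geometry, and this asymptotic analysis rests on combining the uniform Lipschitz bound with the comparison $g \geq g_{\Sp^3}$. A related subtlety is the $|\nabla u|^{-1}$ factor in \eqref{eq:mass_formula}, which requires treating the critical set $\{\nabla u = 0\}$; this is typically handled by performing the Bochner computation at the regularized level $\tau > 0$, where the denominator stays bounded below, and then passing to the limit via monotone convergence.
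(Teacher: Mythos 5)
Your existence construction and the trace-free Hessian decomposition are sound, and the regularization by $\sqrt{|\nabla u|^2+\tau^2}$ is the right way to handle the denominator in \eqref{eq:mass_formula}. The algebraic identity $|\nabla^2 u|^2 = |\nabla^2 u + \cot(\theta)|\nabla u|\,g|^2 + 3\cot^2(\theta)|\nabla u|^2$ is correct given the PDE. However, your proposed route to \eqref{eq:csc2_norm_grad_u} and \eqref{eq:csc_grad_u-grad_theta_alignment} — testing the equation against the weight $\csc^2(\theta)$ and integrating by parts — cannot work as stated. The right-hand sides of both inequalities involve $\int(6-R_g)^+|\nabla u|$, and scalar curvature simply will not appear from any integration by parts of the first-order PDE against a scalar weight; curvature enters only through a Bochner-type identity, and you have reserved that machinery for \eqref{eq:mass_formula} alone.

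The paper's argument produces \emph{all three} inequalities simultaneously from a single integrated Bochner identity. After integrating by parts, discarding the boundary terms at the poles (shown to vanish as $\varepsilon\to 0$ using $H=-\tfrac{2}{\varepsilon}+O(1)$), and regrouping, one obtains an inequality of the form
\begin{equation*}
\int_{\Sp^3}(6-R_g)^+|\nabla u|\,dV_g \;\geq\; T_1 + T_2 + T_3,
\end{equation*}
where $T_1=\int \frac{|\overline{\nabla}^2u|^2}{|\nabla u|}$, $T_2=4\int\csc^2(\theta)\left(|\nabla u|+g(\nabla u,\nabla\theta)\right)$, and $T_3=\int 2\csc^2(\theta)|\nabla u|-16\pi$. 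The crucial step you are missing is that \emph{each} $T_i$ is individually non-negative: $T_1$ is manifestly so, $T_2$ is non-negative by Cauchy–Schwarz together with $|\nabla\theta|\le 1$ (which uses $g\geq g_{\Sp^3}$), and $T_3\geq 0$ is a separate, non-trivial lower bound ($\int 2\csc^2(\theta)|\nabla u|\geq 16\pi$, cited as \cite[Inequality (7.4)]{Hirsch-Kazaras-Khuri-Zhang}) proved by a level-set Gauss–Bonnet argument. Only once you know each $T_i\geq 0$ can you conclude that each is individually bounded by the left side, which is precisely how \eqref{eq:csc2_norm_grad_u}, \eqref{eq:csc_grad_u-grad_theta_alignment}, and \eqref{eq:mass_formula} are extracted. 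Without the lower bound $T_3\geq 0$, the Bochner inequality alone does not yield \eqref{eq:mass_formula} or \eqref{eq:csc_grad_u-grad_theta_alignment} either, so this is not merely a gap in your treatment of the ``lesser'' inequalities but in the whole scheme. Your heuristic that the $8\pi$ arises from polar boundary areas asymptoting to $4\pi$ is also off: the polar boundary terms are shown to vanish, and the $8\pi$ comes from the Gauss–Bonnet contribution of the level sets of $u$.
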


\begin{remark}
    We call $u$ in Lemma \ref{lem:existence} a {\emph{Llarull potential centered at $\pm p$}}.
\end{remark}

\begin{proof}
    This result is essentially proven in \cite{Hirsch-Kazaras-Khuri-Zhang}, but we will give an overview for convenience. The function $u$ is constructed as a limit of solutions on compact domains exhausting $\Sp^3\setminus\{p,-p\}$. Namely for $\varepsilon>0$ write $A_\varepsilon$ to denote $\Sp^3\setminus\left(B^\Sp(p,\varepsilon)\cup B^\Sp(-p,\varepsilon)\right)$, and let $u_\varepsilon$ be the solution to the Dirichlet problem with bounded coefficients
    \begin{equation}
    \begin{cases}
        \Delta_g u_\varepsilon+3\cot(\theta)|\nabla u_\varepsilon|=0&\text{ in } A_\varepsilon\\
        u_\varepsilon=\pm1&\text{ on }\partial B^{\Sp}(\pm p,\varepsilon).
    \end{cases}
    \end{equation}
    The existence of $u_\varepsilon$ is given in \cite[Section 4]{hirsch-kazaras-khuri}. The maximum principle and standard interior Schauder estimates give uniform $C^{2,\alpha}$-bounds, $\alpha\in(0,1)$, for $u_\varepsilon$ on compact subsets of $\Sp^3\setminus\{p,-p\}$. This allows one to apply a diagonal argument and pass to a subsequence so that $u_\varepsilon\to u$ in $C^{2,\beta}_{loc}(\Sp^3\setminus\{p,-p\})$ as $\varepsilon\to 0$, for $\beta<\alpha$. In fact, \cite[Lemma A.1]{Hirsch-Kazaras-Khuri-Zhang} shows that $|\nabla u_\varepsilon|$ is uniformly bounded in $\varepsilon$, and that the limiting solution $u$ extends to a globally Lipschitz function across $\pm p$ so that $u(\pm p)=\pm1$. Since $u$ is a strong solution away from $\pm p$ and has a uniform gradient bound, it is a weak solution to the Llarull potential equation over all of $\Sp^3$, and item $(i)$ follows.

    Next, we point out how to establish \eqref{eq:csc2_norm_grad_u}, \eqref{eq:csc_grad_u-grad_theta_alignment}, and \eqref{eq:mass_formula}. Inequality \eqref{eq:mass_formula} is exactly the statement of \cite[Theorem 2.7]{Hirsch-Kazaras-Khuri-Zhang}, though in the proof extra positive terms are discarded. Retention of these terms leads directly to the remaining claimed inequalities. We start by reorganizing \cite[Inequalities (7.1) and (7.2)]{Hirsch-Kazaras-Khuri-Zhang} to find
    \begin{align}\label{eq:massformulaproof}
    \begin{split}
        &\int_{A_\varepsilon}(6-R_g)^+|\nabla u_\varepsilon|dV_g\\
        &\geq \int_{A_\varepsilon}\frac{|\nabla^2u_\varepsilon+\cot(\theta)|\nabla u_\varepsilon|g|^2}{|\nabla u_\varepsilon|}+4\csc^2(\theta)\left(|\nabla u_\varepsilon|+g(\nabla u_\varepsilon,\nabla\theta)\right)dV_g\\
        &+\left(\int_{A_\varepsilon}2\csc^2(\theta)|\nabla u_\varepsilon|dV_g-16\pi\right)+2\int_{\partial A_\varepsilon}|\nabla u_\varepsilon|(H+2\cot(\varepsilon))dA_g
    \end{split}
    \end{align}
    where $H$ denotes the mean curvature of $\partial B^\Sp(\pm p,\varepsilon)$ with respect to normals pointing towards the poles. To take the liminf as $\varepsilon\to0$, we first claim that the boundary term on the right side of \eqref{eq:massformulaproof} vanishes as $\varepsilon\to0$. Indeed, since $(\Sp^3,g)$ is smooth near $\pm p$, one may expand $H=-\tfrac{2}{\varepsilon}+O(1)$ and so the top order term in $H+2\cot(\varepsilon)$ cancels. Then, using the fact that $|\nabla u_\varepsilon|$ is bounded and the area of the $\partial B^\Sp(\pm p,\varepsilon)$ tends to $0$, we find that the boundary integral vanishes as $\varepsilon\to0$. Gearing up to use Fatou's Lemma, note that the second integrand on the right side of \eqref{eq:massformulaproof} is non-negative by combining the Cauchy-Schwartz inequality with the fact $|\nabla \theta|\leq 1$, which follows from the assumption $g\geq g_{\Sp^3}$. Taking the liminf of \eqref{eq:massformulaproof}, applying the Dominated Convergence Theorem to the left side and Fatou's Lemma to the right side, we may conclude
    \begin{align}\label{eq:massformulaproof2}
    \begin{split}
        &\int_{\Sp^3}(6-R_g)^+|\nabla u|dV_g\\
        &\geq \int_{\Sp^3}\frac{|\nabla^2u+\cot(\theta)|\nabla u|g|^2}{|\nabla u|}+4\csc^2(\theta)\left(|\nabla u|+g(\nabla u,\nabla\theta)\right)dV_g\\
        &+\left(\int_{\Sp^3}2\csc^2(\theta)|\nabla u|dV_g-16\pi\right).
    \end{split}
    \end{align}
    To apply Fatou's Lemma to the first term on the right side of \eqref{eq:massformulaproof}, one must apply an argument which uses the strong convergence of $u_\varepsilon$ and the fact that $u$ is a non-constant solution to an elliptic PDE to show pointwise almost everywhere convergence. See the end of the argument in \cite[Theorem 2.7]{Hirsch-Kazaras-Khuri-Zhang} for details.

    Lastly, observe that the final term on the right side of \eqref{eq:massformulaproof2} is non-negative by \cite[Inequality (7.4)]{Hirsch-Kazaras-Khuri-Zhang}. It follows that each of the three grouped terms on the right side of \eqref{eq:massformulaproof2} are individually bounded by the left side, leading to \eqref{eq:csc2_norm_grad_u}, \eqref{eq:csc_grad_u-grad_theta_alignment}, and \eqref{eq:mass_formula}.
\end{proof}

    \subsection{Cheeger Constant}\label{subsec:Cheeger}
In this section we review the definition of the Cheeger constant which is the boundary case of a wider definition of $\alpha$-isoperimetric constants defined in \cite{li_2012}. 

\begin{defn}\cite[Definition 9.2]{li_2012}\label{def:Cheeger}
    Given a closed Riemannian manifold $(M,g)$, define $IN_1(M,g)$ as the infimum
    \begin{equation}
        \inf\left\lbrace\frac{|S|_g}{\min\{|\Omega_1|_g,|\Omega_2|_g\}}:M=\Omega_1\cup S \cup \Omega_2,\partial \Omega_1=S=\partial\Omega_2\right\rbrace.
    \end{equation}
    $IN_1(M,g)$ is called the  Neumann $1-$isoperimetric, or Cheeger, constant of $(M,g)$.
\end{defn}
A positive lower bound on the Cheeger constant rules out dramatic bubble- or dumbbell-like regions since it prohibits surfaces of small area to form between two regions with substantial volume.  We also remind the reader of the relationship between the isoperimetric type constant above and the Sobolev constant defined below.

\begin{defn}\cite[Definition 9.4]{li_2012}\label{def:Sobolev}
     Given a closed Riemannian manifold $(M,g)$, define $SN_1(M,g)$ as the infimum
     \begin{equation}\label{eq:def_of_Neumann_Sobolev_const}
         \inf\left\lbrace\frac{\|\nabla f\|_{L^{1}(g)}}{\inf_{k\in\mathbb{R}}\|f-k\|_{L^1(g)}}:f\in W^{1,1}(g)\right\rbrace.
     \end{equation}
     $SN_1(M,g)$ is called the Neumann $1$-Sobolev constant of $(M,g)$.
\end{defn}
The infimum in \eqref{eq:def_of_Neumann_Sobolev_const} is actually achieved for any $f\in W^{1,1}(g)$. This is because $k\mapsto\int|f-k|dV_g$ is continuous, positive, and goes to $\infty$ as $k\to\pm\infty$. In what follows, whenever we use \eqref{eq:def_of_Neumann_Sobolev_const} to analyze an element of $W^{1,1}(g)$, we will always work with a constant which achieves the infimum.
% [[Remark: the infimum over $k$ in the definition of $SN_1$ is achieved for any $f\in W^{1,1}$. This is because $k\mapsto\int|f-k|dV$ is continuous, positive, and goes to $\infty$ as $k\to\pm\infty$.]]
The following important relationship between Definition \ref{def:Cheeger} and Definition \ref{def:Sobolev} is given in \cite[Theorem 9.6]{li_2012}:
\begin{align}
     IN_1(M,g) =SN_1(M,g).
\end{align}
So by assuming a bound on the Cheeger constant we obtain geometric control which rules out bubbling, and we also receive analytic control of Llarull potentials in the form of a Poincare inequality. We will take advantage of this analytic control in Section \ref{sect: global estimates}.

\subsection{Sormani-Wenger Intrinsic Flat Convergence}\label{subsec:VADB}
In order to show volume preserving Sormani-Wenger Intrinsic Flat convergence we will apply the Volume Above Distance Below theorem of the first named author, R. Perales, and C. Sormani which provides geometric hypotheses which imply the desired convergence for a sequence of Riemannian manifolds.
    
    \begin{thm}\cite[Theorem 1.1]{Allen-Perales-Sormani-VADB} \label{thm-VADB}
        Let $M$ be a connected closed orientable manifold, $g_0$ a smooth Riemannian metric on $M$, and $\{g_i\}_{i=1}^\infty$ a sequence of continuous Riemannian metrics on $M$. If the diameter is bounded
        \begin{align}
            \diam(M,g_i)&\le D,
            \end{align}
            volume converges,
            \begin{align}
              |M|_{g_i} &\rightarrow |M|_{g_0}, 
            \end{align}
            and we have that distances converge from below
            \begin{align}
            g_i &\ge \left(1-\frac{1}{i}\right)g_0,
        \end{align}
        then $(M,g_i)$ converges to $(M,g_0)$ in the volume preserving Sormani-Wenger Intrinsic Flat sense
        \begin{align}
            d_{\mathcal{VF}}((M,g_i),(M,g_0)) \rightarrow 0.
        \end{align}
    \end{thm}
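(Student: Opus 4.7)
The plan is to produce, for each $i$, a common compact metric space $Z_i$ together with isometric embeddings $\phi_i\colon (M,g_0)\hookrightarrow Z_i$ and $\psi_i\colon (M,g_i)\hookrightarrow Z_i$, and an integral current $B_i$ in $Z_i$ whose boundary realizes the difference of the two pushed-forward fundamental classes (up to a small error with controlled mass), with $\mathbf{M}(B_i)\to 0$. Since total volume convergence is given by hypothesis, this produces not merely intrinsic flat but volume preserving intrinsic flat convergence. The natural candidate for $Z_i$ is a warped cylinder $M\times[0,h_i]$ for a height $h_i\downarrow 0$, equipped with a Riemannian metric $\tilde g_i$ that interpolates between $g_0$ on the bottom slice and $g_i$ on the top slice, plus $dt^{\,2}$ in the vertical direction.

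The distance hypothesis $g_i\geq(1-1/i)g_0$ is what makes the construction plausible: any path $\gamma(s)=(\alpha(s),t(s))$ through the interior of $Z_i$ joining two points of the same slice has length
\begin{equation*}
L(\gamma)=\int\sqrt{\tilde g_i(\dot\alpha,\dot\alpha)+\dot t^{\,2}}\,ds\;\geq\;\sqrt{1-\tfrac{1}{i}}\;L_{g_0}(\alpha),
\end{equation*}
so interior detours cannot shorten bottom-slice distances below $\sqrt{1-1/i}\,d_{g_0}$. Granting that the two boundary slices of $Z_i$ embed isometrically, the filling mass is controlled by
\begin{equation*}
\mathbf{M}(B_i)\leq h_i\sup_{t\in[0,h_i]}|M\times\{t\}|_{\tilde g_i}\leq C\,h_i\bigl(|M|_{g_0}+|M|_{g_i}\bigr),
\end{equation*}
which vanishes as $h_i\to 0$. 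The diameter hypothesis $\diam(M,g_i)\leq D$ guarantees $Z_i$ remains compact and precludes spurious contributions from long paths in the inequalities above.

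The main obstacle is promoting the slice embeddings from bi-Lipschitz with factor $\sqrt{1-1/i}$ to \emph{genuinely} isometric. The hypotheses provide no upper bound on $d_{g_i}$ in terms of $d_{g_0}$, so $g_i$ can locally be much larger than $g_0$, and it is precisely the volume convergence $|M|_{g_i}\to|M|_{g_0}$ that must be used to force these discrepancies to be rare. Concretely, combining the pointwise bound $dV_{g_i}\geq(1-1/i)^{n/2}\,dV_{g_0}$ with total volume convergence shows that $(g_i-g_0)^+\to 0$ in an integral sense, so $M$ splits into a "good" region where $g_i$ is quantitatively $C^0$-close to $g_0$ and a "bad" region of arbitrarily small $g_0$-measure. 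On the good region the interpolated cylinder construction yields honest isometric embeddings after a correct tuning of $\tilde g_i$; the bad region contributes only a small-mass filling controlled by its volume times $D$. Choosing $h_i\to 0$ at a rate slower than the decay of the bad region's volume ensures that both the filling mass and the bi-Lipschitz defect vanish simultaneously, giving $d_{\mathcal{VF}}((M,g_i),(M,g_0))\to 0$.
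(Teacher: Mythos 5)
First, for calibration: the paper does not prove this statement at all --- Theorem \ref{thm-VADB} is quoted from Allen--Perales--Sormani \cite{Allen-Perales-Sormani-VADB} and used as a black box --- so your proposal can only be measured against the cited argument, which proceeds along different, more global lines and in particular does not attempt the interpolating-cylinder filling you describe.

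On its own terms, your sketch has a genuine gap at exactly the step you flag and then defer. The definition of $d_{\mathcal{F}}$ (hence $d_{\mathcal{VF}}$) requires the two maps into $Z_i$ to be \emph{distance-preserving} embeddings. With any interpolation on $M\times[0,h_i]$ bounded below by $\left(1-\tfrac1i\right)g_0+dt^2$, the bottom slice is essentially isometrically embedded, but the top slice is not: for $x,y$ in the top slice one always has $d_{Z_i}(x,y)\le \left(1-\tfrac1i\right)^{-1/2}d_{g_0}(x,y)+2h_i$ by dipping to the bottom, while the hypotheses give no upper bound on $d_{g_i}$ in terms of $d_{g_0}$, so the distortion of the top embedding can be of order $D$ (two points at the tips of long thin splines); no tuning of $\tilde g_i$ or $h_i$ removes this shortcut. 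Your proposed remedy only controls the $g_0$-measure of the bad set $\{g_i\not\le(1+\delta)g_0\}$. That much is correct --- the volume ratio is $L^1$-close to $1$ and the eigenvalues are bounded below, so the set where some eigenvalue exceeds $1+\delta$ has small measure --- although the stronger assertion that $(g_i-g_0)^+\to0$ in an integral sense does not follow, for lack of uniform integrability. But measure-smallness of the bad set does not bound the \emph{distance} distortion it causes: $d_{g_i}$ can exceed $d_{g_0}$ by a definite amount even between points of the good region whenever the connecting $g_0$-geodesics must cross the bad set, and ruling out such behavior (morally, because a ``wall'' that lengthens many distances must carry non-negligible excess volume, by isoperimetric properties of the fixed limit $(M,g_0)$) is the actual content of the theorem rather than something one gets for free. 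Turning the excision idea into a proof would require Lakzian--Sormani-type machinery with quantitative control of intrinsic distances on the good region and of the area of the excised boundary, none of which is extracted from the hypotheses in your sketch; the cited proof avoids a direct filling precisely because of this obstruction, exploiting instead the almost $1$-Lipschitz identity maps from $(M,d_{g_i})$ to $(M,d_{g_0})$ together with volume convergence in a global compactness-and-identification argument.
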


    Our main approach to proving Theorem \ref{thm-Main} will be to show volume convergence using Llarull potentials so that we can apply Theorem \ref{thm-VADB}. To this end, we establish a point-picking result which will help control the volume of balls around antipodal points later on.

    \begin{lem}\label{lem:small_theta_balls}
        Fix $V>0$. There is a constant $\Cl{const:radius}(V)$ so that the following holds: Given any sufficiently small radius $r>0$ and Riemannian $3$-sphere $(\Sp^3,g)$ with volume $|\Sp^3|_g\leq V$, there are antipodal points $\pm p$ so that 
        \begin{equation}\label{eq:volumeantipodal}
            |B^{\Sp}(p,r)|_g+|B^{\Sp}(-p,r)|_g<\Cr{const:radius}(V)r^3.
        \end{equation}
    \end{lem}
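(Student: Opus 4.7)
The plan is to prove the lemma by an averaging argument over the space of antipodal pairs, exploiting the fact that the round ball $B^\Sp(p,r)$ is a fixed object in the unit round sphere whose round volume is cubic in $r$ for small $r$. The key observation is that as $p$ ranges over $\Sp^3$, the pairs $\{p,-p\}$ cover each antipodal pair exactly twice, so we may simply integrate $|B^\Sp(p,r)|_g + |B^\Sp(-p,r)|_g$ against the round volume form on $\Sp^3$ and hunt for a point $p$ where the integrand is no larger than its mean.

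To carry this out, I would first write
\begin{equation*}
\int_{\Sp^3}|B^\Sp(p,r)|_g\,dV_{g_{\Sp^3}}(p) = \int_{\Sp^3}\int_{\Sp^3}\mathbf{1}_{\{\theta_p(x)<r\}}\,dV_g(x)\,dV_{g_{\Sp^3}}(p)
\end{equation*}
and apply Fubini to swap the order of integration. Since $\theta_p(x)=\theta_x(p)$, the inner integral becomes the round volume $|B^\Sp(x,r)|_{g_{\Sp^3}}$, which for all sufficiently small $r$ is bounded by $C_0 r^3$ for a universal constant $C_0$ (from the Taylor expansion of the round volume of small geodesic balls). Combined with the assumption $|\Sp^3|_g\leq V$, this gives
\begin{equation*}
\int_{\Sp^3}|B^\Sp(p,r)|_g\,dV_{g_{\Sp^3}}(p) \leq C_0 V r^3.
\end{equation*}

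Replacing $p$ by $-p$ in the above (a round isometry preserves $dV_{g_{\Sp^3}}$) yields the same bound for the $|B^\Sp(-p,r)|_g$ integral. Adding the two and dividing by $|\Sp^3|_{g_{\Sp^3}}=2\pi^2$ shows that the average value of $|B^\Sp(p,r)|_g + |B^\Sp(-p,r)|_g$ as $p$ ranges over the round unit sphere is at most $\pi^{-2}C_0 V r^3$. Therefore some $p\in\Sp^3$ attains at most this value, which proves the claim with $\Cr{const:radius}(V):=\pi^{-2}C_0 V$.

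I do not expect a genuine obstacle here: the only point requiring mild care is justifying the bound $|B^\Sp(x,r)|_{g_{\Sp^3}}\leq C_0 r^3$ for small $r$, which is standard and explains the hypothesis that $r$ is sufficiently small. No property of the metric $g$ beyond the volume bound enters, so the constant truly depends only on $V$.
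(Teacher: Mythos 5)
Your proof is correct. It takes the same underlying idea as the paper — a pigeonhole argument exploiting the total volume bound — but implements it via continuous averaging (Tonelli/Fubini over the round volume measure) rather than the paper's discrete ball-packing. The paper instead chooses $N\sim r^{-3}$ points in the upper hemisphere whose round $r$-balls (and their antipodes) are pairwise disjoint, sums the $g$-volumes of these $2N$ balls, bounds the total by $V$, and concludes by pigeonhole that at least one antipodal pair has small $g$-volume. Your averaging version is slightly cleaner in that it avoids having to exhibit an explicit disjoint packing, it gives a sharper constant ($\Cr{const:radius}(V)=\pi^{-2}C_0V$ with $C_0$ near $4\pi/3$ rather than the paper's crude $10^{12}V$), and it makes transparent that the key input is simply the symmetry $\theta_p(x)=\theta_x(p)$ plus homogeneity of the round sphere. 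The paper's discrete version is more elementary in the sense that it needs no Fubini, only monotonicity of the measure on a disjoint union. Either route is perfectly adequate; the constant's exact value is immaterial in the application.
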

    \begin{proof}
        Fix $r>0$ less than $\tfrac{1}{10^4}$. To use a very coarse estimate, one may find $N:=\left(\tfrac{1}{10^4r}\right)^3$ points $\{p_i\}_{i=1}^N$ in the upper hemisphere so that the constant curvature balls $\{B^{\Sp}(p_i,r),B^{\Sp}(-p_i,r)\}_{i=1}^N$ are all disjoint. It follows that
        \begin{equation}
            V\geq\sum_{i=1}^N\left(\abs{B^{\Sp}(p_i,r)}_{g}+\abs{B^{\Sp}(-p_i,r)}_{g}\right).
        \end{equation}
        Consequently, at least one pair of points $\pm p_{i_0}$ in our list must satisfy the bound $\abs{B^{\Sp}(p_{i_0},r)}_{g}+\abs{B^{\Sp}(-p_{i_0},r)}_{g}\leq10^{12}r^3V$. 
    \end{proof}

    %%%%%%%%%%%%%%%%%%%%%%%%%%%%%%%%%%%%%%%%%%%%%%%%%%%%%%%%%%%%%%%%%
    \section{Global Estimates} \label{sect: global estimates}
    In this section we combine the analytic inequalities in Lemma \ref{lem:existence} with the geometric properties satisfied by members of $\mathcal{S}(V,D,\overline{m},\Lambda)$ to deduce a variety of uniform and effective estimates on the Llarull potentials introduced in Lemma \ref{lem:existence} above. We start by establishing integral gradient bounds.
    \begin{lem}\label{lem:L2Grad_u_bound}
        There exists a constant $\Cl{const:L2Grad_u_bound}(\overline{m})$ so that for any $(\Sp^3,g)$ in $\Sc(V,D,\overline{m},\Lambda)$ with Llarull potential $u$ centered at $\pm p$, we have
        \begin{equation}
            \|\nabla u\|_{L^{2}(g)}\leq \Cr{const:L2Grad_u_bound}(\overline{m}).%24\pi+\frac32 m
        \end{equation}
    \end{lem}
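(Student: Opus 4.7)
The plan is to combine the PDE satisfied by $u$ with the integral inequality \eqref{eq:csc2_norm_grad_u} of Lemma \ref{lem:existence}. The first step is to multiply the equation $\Delta_g u+3\cot(\theta)|\nabla u|=0$ by $u$ and integrate over $\Sp^3$. Integration by parts should yield the identity
\begin{equation*}
\int_{\Sp^3} |\nabla u|^2 \, dV_g = 3\int_{\Sp^3} u\cot(\theta)|\nabla u| \, dV_g.
\end{equation*}
Next, since the approximating solutions $u_\varepsilon$ on $A_\varepsilon$ are bounded by $\pm 1$ on their boundary and solve a PDE that can be rewritten as $\Delta u_\varepsilon + V\cdot\nabla u_\varepsilon = 0$ with the bounded drift $V=3\cot(\theta)\nabla u_\varepsilon/|\nabla u_\varepsilon|$, the maximum principle gives $|u_\varepsilon|\le 1$ on $A_\varepsilon$, hence $|u|\le 1$ globally. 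Combining this with the elementary bound $|\cot(\theta)|\le\csc^2(\theta)$ (valid since $\sin\theta\le 1$) and then invoking \eqref{eq:csc2_norm_grad_u} gives
\begin{equation*}
\|\nabla u\|_{L^2(g)}^2 \;\le\; 3\int_{\Sp^3}\csc^2(\theta)|\nabla u|\,dV_g \;\le\; 24\pi+\tfrac{3}{2}\int_{\Sp^3}(6-R_g)^+|\nabla u|\,dV_g.
\end{equation*}

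The final step is to close the estimate via Cauchy--Schwarz and the scalar curvature hypothesis (iv): since $\|(6-R_g)^+\|_{L^2(g)}\le \overline{m}^2$, we have
\begin{equation*}
\int_{\Sp^3}(6-R_g)^+|\nabla u|\,dV_g \le \overline{m}^2\,\|\nabla u\|_{L^2(g)}.
\end{equation*}
This produces a quadratic inequality of the form $X^2\le 24\pi+\tfrac{3\overline{m}^2}{2}X$ for $X=\|\nabla u\|_{L^2(g)}$, from which an explicit bound $X\le \Cr{const:L2Grad_u_bound}(\overline{m})$ follows immediately.

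The only delicate point will be rigorously justifying the integration by parts across the singular poles, where $\cot(\theta)$ blows up. I would do this by working with the approximations $u_\varepsilon$ on $A_\varepsilon=\Sp^3\setminus(B^\Sp(p,\varepsilon)\cup B^\Sp(-p,\varepsilon))$ introduced in the proof of Lemma \ref{lem:existence}. For each fixed $g$, the $C^{2,\alpha}_{\mathrm{loc}}$ convergence $u_\varepsilon\to u$ on $\Sp^3\setminus\{p,-p\}$ together with the uniform-in-$\varepsilon$ gradient bound of \cite[Lemma A.1]{Hirsch-Kazaras-Khuri-Zhang} allow the boundary term in the integration by parts on $A_\varepsilon$ to be controlled by (gradient bound)$\times$(area of $\partial A_\varepsilon$), which tends to zero as $\varepsilon\to 0$ because the areas of the small geodesic spheres shrink like $O(\varepsilon^2)$. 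Dominated convergence then upgrades the limit identity to the full sphere, using that $\csc^2(\theta)|\nabla u|$ is integrable thanks to \eqref{eq:csc2_norm_grad_u} (the right-hand side of which is finite a~priori for each fixed $g$). Once this identity is in hand, the uniform bound follows from the three-line computation above and the rest of the argument is purely algebraic.
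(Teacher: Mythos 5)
Your proposal is correct and follows essentially the same route as the paper: multiply the Llarull potential equation by $u$ and integrate by parts, use $|u|\le1$ from the maximum principle, replace $|\cot\theta|$ by $\csc^2\theta$, invoke \eqref{eq:csc2_norm_grad_u}, and close with Cauchy--Schwarz and the scalar curvature hypothesis. The extra care you take in justifying the integration by parts via the exhaustion $A_\varepsilon$ and uniform gradient bound is a reasonable elaboration of what the paper leaves implicit, not a genuinely different approach.
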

    \begin{proof}
        Multiplying both sides of the Llarull potential equation appearing in Lemma \ref{lem:existence} by $u$ and integrating by parts shows
        %\begin{equation}
        %    \int_{\Sp^3}u\Delta udV_{g}=-3\int_{\Sp^3}\cot(\theta) u\abs{\nabla u}dV_{g}.
        %\end{equation}
        %Since $u$ is a weak solution, we may integrate the left hand term by parts to obtain
        \begin{equation}
            -\int_{\Sp^{3}}\abs{\nabla u}^2dV_{g}=-3\int_{\Sp^3}\cot({\theta})u\abs{\nabla u}dV_{g}.
        \end{equation}
        According to the maximum principle, $\abs{u}\leq 1$. This allows us to take the absolute value of both sides of the above equation and show
        \begin{align}\label{eq:l2gradbound1}
        \begin{split}
            \int_{\Sp^3}\abs{\nabla u}^2dV_{g}&\leq3\int_{\Sp^3}\abs{\cot({\theta})}\abs{u}\abs{\nabla u}dV_{g}\\
            {}&\leq 3\int_{\Sp^3}\abs{\cot(\theta)}\abs{\nabla u}dV_{g}\\
            {}&\leq3\int_{\Sp^2}\csc^{2}(\theta)\abs{\nabla u}dV_{g},
        \end{split}
        \end{align}
        where the last line follows from the fact $\abs{\cot(\theta)}\leq\csc^2{(\theta)}$.
        Combining \eqref{eq:l2gradbound1} with \eqref{eq:csc2_norm_grad_u} yields
        \begin{equation}
            \int_{\Sp^3}\abs{\nabla u}^2dV_{g}\leq24\pi+\frac{3}{2}\int_{\Sp^3}\left(6-R_{g}\right)^{+}\abs{\nabla u}dV_{g}.
        \end{equation}
        Finally, applying H{\"o}lder's inequality to the integral on the right implies 
        \begin{equation}
            \|\nabla u\|^2_{L^{2}(g)}\leq24\pi+\frac32\|\left(6-R_g\right)^{+}\|_{L^2(g)}\|\nabla u\|_{L^2(g)}.
        \end{equation}
        It is a straightforward dichotomy that either $\|\nabla u\|_{L^2(g)}\leq 1$ or $\|\nabla u\|_{L^{2}(g)}>1$. In the second case, we may divide both sides of the above inequality by $\|\nabla u\|_{L^{2}(g)}$ to obtain
        \begin{equation}
            \|\nabla u\|_{L^{2}(g)}\leq24\pi+\frac32\|\left(6-R_g\right)^{+}\|_{L^{2}(g)}.
        \end{equation}
        In either case, we have demonstrated that
        \begin{equation}
            \|\nabla u\|_{L^2(g)}\leq 24\pi+\frac32 \overline{m}.
        \end{equation}
    \end{proof}
    As a simple consequence, we observe that the $L^2$ gradient bound of Lemma \ref{lem:L2Grad_u_bound} implies an $L^1$ bound.
    \begin{cor}\label{cor:L1_norm_grad_u}
        There exists a constant $\Cl{const:L1_norm_grad_u}(\overline{m})$ so that for any $(\Sp^3,g)$ in $\Sc(V,D,\overline{m},\Lambda)$ with Llarull potential $u$ centered at $\pm p$, we have
        \begin{equation}
            \|\nabla u\|_{L^{1}(g)}\leq\Cr{const:L1_norm_grad_u}(\overline{m}).
        \end{equation}
    \end{cor}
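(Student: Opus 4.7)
The plan is to avoid using Hölder directly against the volume bound (which would only give a constant depending on both $V$ and $\overline{m}$) and instead exploit the singular weight $\csc^2(\theta)$ already present in the inequalities of Lemma \ref{lem:existence}. The observation is simply that $\sin^2(\theta)\leq 1$, which yields the pointwise bound $|\nabla u|\leq \csc^2(\theta)|\nabla u|$ almost everywhere on $\Sp^3$. Integrating this and feeding it into inequality \eqref{eq:csc2_norm_grad_u} of Lemma \ref{lem:existence} gives
\begin{equation*}
    \|\nabla u\|_{L^1(g)}\leq \int_{\Sp^3}\csc^2(\theta)|\nabla u|\,dV_g\leq 8\pi+\tfrac12\int_{\Sp^3}(6-R_g)^+|\nabla u|\,dV_g.
\end{equation*}

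Next, I would apply Cauchy--Schwarz to the remaining integral to estimate
\begin{equation*}
    \int_{\Sp^3}(6-R_g)^+|\nabla u|\,dV_g\leq \|(6-R_g)^+\|_{L^2(g)}\,\|\nabla u\|_{L^2(g)}.
\end{equation*}
The first factor is controlled by $\overline{m}^2$ through hypothesis $(iv)$ of Definition \ref{defn:FamilyOfMetrics}, and the second is controlled by $\Cr{const:L2Grad_u_bound}(\overline{m})$ via Lemma \ref{lem:L2Grad_u_bound}. Combining these bounds produces a constant depending only on $\overline{m}$, as claimed.

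There is no real obstacle here; the only point requiring a little care is ensuring that $\csc^2(\theta)|\nabla u|$ is actually integrable so that the middle step above is meaningful, which is guaranteed by \eqref{eq:csc2_norm_grad_u} together with the finiteness of $\int(6-R_g)^+|\nabla u|\,dV_g$ established in the previous display. This corollary is essentially a bookkeeping consequence of the singular weighted estimate already baked into the Llarull potential machinery.
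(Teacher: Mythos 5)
Your argument is correct and is essentially the same as the paper's: both use $\csc^2(\theta)\geq 1$ to reduce the $L^1$ estimate to inequality \eqref{eq:csc2_norm_grad_u}, then apply H\"older/Cauchy--Schwarz together with Lemma \ref{lem:L2Grad_u_bound} and hypothesis $(iv)$. Your remark about avoiding a $V$-dependence by not using H\"older against the volume is a nice clarifying observation, but the route is the same.
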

    \begin{proof}
        Since $\csc(\theta)\geq 1$, inequality \eqref{eq:csc2_norm_grad_u} gives
        \begin{equation}
            \int_{\Sp^3}\abs{\nabla u}dV_{g}\leq 8\pi+\frac{1}{2}\int_{\Sp^3}\left(6-R_{g}\right)^{+}\abs{\nabla u}dV_{g}.
        \end{equation}
        Applying H{\"o}lder's inequality and Lemma \ref{lem:L2Grad_u_bound} gives the result.
    \end{proof}
    Given antipodal points $\pm p$ and a function $f$ on $(\Sp^3,g)$, let $\overline{\nabla}^2f$ denote the {\emph{spacetime Hessian} of $f$}, defined by
        \begin{equation}
            \overline{\nabla}^2f:=\nabla^2f+\cot(\theta)\abs{\nabla f}g.
        \end{equation}
    We have the following consequence of \eqref{eq:mass_formula}.
    \begin{lem}\label{lem:u-theta_alignment}
        There exists a constant $\Cl{const:u-theta_alignment}(\overline{m})$ so that for any $(\Sp^3,g)$ in $\Sc(V,D,\overline{m},\Lambda)$ with Llarull potential $u$ centered at $\pm p$, we have
        \begin{align}
            &\int_{\Sp^3}\csc^2(\theta)\abs{\nabla u+\abs{\nabla u}\nabla\theta}dV_{g}\leq\Cr{const:u-theta_alignment}(\overline{m})\left\|\left(6-R_{g}\right)^{+}\right\|^{\frac12}_{L^{2}(g)}\label{eq:u-theta1}
            \\
            &\int_{\Sp^3}\csc({\theta})\abs{\overline{\nabla}^2u}dV_{g}\leq\Cr{const:u-theta_alignment}(\overline{m})\left\|\left(6-R_{g}\right)^{+}\right\|^{\frac12}_{L^{2}(g)}.\label{eq:u-theta2}
        \end{align}
    \end{lem}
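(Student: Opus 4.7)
The plan is to derive both inequalities as parallel consequences of Cauchy–Schwarz, the three integral estimates of Lemma \ref{lem:existence}, and the uniform $L^{2}$ gradient bound of Lemma \ref{lem:L2Grad_u_bound}. In each case I would split the integrand into a product of two pieces, one controlled uniformly in $\overline{m}$ and the other controlled by $\|(6-R_g)^+\|_{L^{2}(g)}$, so that the geometric mean yields the desired exponent of $\tfrac{1}{2}$.

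For \eqref{eq:u-theta2}, the natural factorization is
\[
\csc(\theta)|\overline{\nabla}^2 u| = \bigl(\csc^2(\theta) |\nabla u|\bigr)^{1/2} \cdot \bigl(|\overline{\nabla}^2 u|^2/|\nabla u|\bigr)^{1/2},
\]
so Cauchy–Schwarz gives
\[
\int_{\Sp^3} \csc(\theta) |\overline{\nabla}^2 u|\, dV_g \leq \left(\int_{\Sp^3} \csc^2(\theta) |\nabla u|\, dV_g\right)^{1/2} \left(\int_{\Sp^3} \frac{|\overline{\nabla}^2 u|^2}{|\nabla u|}\, dV_g\right)^{1/2}.
\]
The first factor is bounded by a constant depending only on $\overline{m}$ using \eqref{eq:csc2_norm_grad_u}, Hölder's inequality, and Lemma \ref{lem:L2Grad_u_bound}. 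The second factor is bounded via \eqref{eq:mass_formula} followed by Hölder and Lemma \ref{lem:L2Grad_u_bound} by $C(\overline{m}) \|(6-R_g)^+\|_{L^{2}(g)}^{1/2}$, as required.

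For \eqref{eq:u-theta1}, the key observation is the pointwise identity
\[
|\nabla u + |\nabla u| \nabla \theta|^2 = |\nabla u|^2\bigl(1 + |\nabla \theta|^2\bigr) + 2|\nabla u|\, g(\nabla u, \nabla \theta) \leq 2|\nabla u|\bigl(|\nabla u| + g(\nabla u, \nabla \theta)\bigr),
\]
where the inequality uses $|\nabla \theta| \leq 1$, which holds because $g \geq g_{\Sp^3}$. Taking square roots and then applying Cauchy–Schwarz to the weight $\csc^2(\theta) = (\csc^2(\theta))^{1/2} \cdot (\csc^2(\theta))^{1/2}$ produces the bound
\[
\int_{\Sp^3} \csc^2(\theta) |\nabla u + |\nabla u| \nabla \theta|\, dV_g \leq \sqrt{2}\left(\int_{\Sp^3} \csc^2(\theta)|\nabla u|\, dV_g\right)^{1/2} \left(\int_{\Sp^3} \csc^2(\theta) \bigl(|\nabla u| + g(\nabla u, \nabla \theta)\bigr) dV_g\right)^{1/2}.
\]
The first factor is handled exactly as above; the second factor is dominated via \eqref{eq:csc_grad_u-grad_theta_alignment}, Hölder, and Lemma \ref{lem:L2Grad_u_bound} by a constant multiple of $\|(6-R_g)^+\|_{L^{2}(g)}^{1/2}$.

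There is no genuinely hard step here: the content of the argument is the pointwise identity above, which converts the non-negative quantity $|\nabla u + |\nabla u|\nabla \theta|^2$ into a form where the factor $|\nabla u| + g(\nabla u, \nabla \theta)$ exactly matches the integrand controlled by \eqref{eq:csc_grad_u-grad_theta_alignment}. Once this is in place, both estimates reduce to routine Cauchy–Schwarz applications that redistribute the singular weight $\csc^2(\theta)$ between one factor that is bounded uniformly and one factor that carries the smallness of $(6-R_g)^+$.
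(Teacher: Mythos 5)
Your proposal is correct and follows essentially the same route as the paper: the same pointwise expansion of $\abs{\nabla u+\abs{\nabla u}\nabla\theta}^2$, the same split of $\csc^2(\theta)$ into two square-root factors, and the same applications of \eqref{eq:csc2_norm_grad_u}, \eqref{eq:csc_grad_u-grad_theta_alignment}, \eqref{eq:mass_formula}, H\"older's inequality, and Lemma \ref{lem:L2Grad_u_bound}. The only surface difference is terminology (you say ``Cauchy--Schwarz'' where the paper invokes H\"older for the $L^2\times L^2$ pairing, which is the same thing).
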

    \begin{proof}
        Begin by expanding $\abs{\nabla u+\abs{\nabla u}\nabla\theta}^2$ and applying $|\nabla \theta|\leq 1$ to find
        \begin{align}\label{eq:Grad_u-Grad_theta}
        \begin{split}
            \abs{\nabla u+\abs{\nabla u}\nabla\theta}^2&=\abs{\nabla u}^2+2\abs{\nabla u}g\left(\nabla u,\nabla\theta\right)+\abs{\nabla u}^2\abs{\nabla\theta}^2\\
            {}&\leq 2\abs{\nabla u}\left(\abs{\nabla u}+g\left(\nabla u,\nabla\theta\right)\right).
        \end{split}
        \end{align}
%        Since $g\geq g_{\Sp^3}$, we have $|\nabla\theta|\leq1$.
%        Therefore, if we factor out $\abs{\nabla u}$ on the right hand side of Equation %\eqref{eq:Grad_u-Grad_theta}, we get
%        \begin{equation}
%            \abs{\nabla u+\abs{\nabla u}\nabla\theta}^2\leq2\abs{\nabla u}\left(\abs{\nabla u}+g\left(\nabla u,\nabla\theta\right)\right).
%        \end{equation}
        Taking the square root of the above, multiplying both sides by $\csc^{2}(\theta)$, and then integrating gives us
        \begin{align}\begin{split}
            &\int_{\Sp^3}\csc^2(\theta)\abs{\nabla u+\abs{\nabla u}\nabla\theta}dV_{g}\\
            &\leq\int_{\Sp^3}\left(\csc(\theta)\abs{\nabla u}^{\frac12}\right)\left(\csc(\theta)\sqrt{2\left(\abs{\nabla u}+g\left(\nabla u,\nabla\theta\right)\right)}\right)dV_{g}.
            \end{split}
        \end{align}
        We may apply H{\"o}lder's inequality to the right hand side and use inequality \eqref{eq:csc2_norm_grad_u} in Lemma \ref{lem:existence} in order to obtain
        \begin{align}\begin{split}
            &\int_{\Sp^3}\csc^2(\theta)\abs{\nabla u+\abs{\nabla u}\nabla\theta}dV_{g}\\
            &\leq\left(8\pi+\frac12 \int_{\Sp^3}\left(6-R_{g}\right)^{+}\abs{\nabla u}dV_{g}\right)^{\frac{1}{2}}
            \\
            &\times\left(2\int_{\Sp^3}\csc^2(\theta)\left(\abs{\nabla u}+g\left(\nabla u,\nabla\theta\right)\right)dV_{g}\right)^{\frac12}.
            \end{split}
        \end{align}
        Using Lemma \ref{lem:L2Grad_u_bound} and inequality \eqref{eq:csc_grad_u-grad_theta_alignment}, the above becomes
        \begin{align}\begin{split}
            \int_{\Sp^3}\csc^2(\theta)\abs{\nabla u+\abs{\nabla u}\nabla\theta}dV_{g}\leq&\left(8\pi+\frac{1}{2}\overline{m}\Cr{const:L2Grad_u_bound}(\overline{m})\right)^{\frac12}
            \\
            &\times\left(\frac12\int_{\Sp^3}\left(6-R_{g}\right)^{+}\abs{\nabla u}dV_{g}\right)^{\frac12}.
            \end{split}
        \end{align}
        Leveraging Lemma \ref{lem:L2Grad_u_bound} and H{\"o}lder's inequality once more gives the following 
        \begin{align}\begin{split}
            &\int_{\Sp^3}\csc^2(\theta)\abs{\nabla u+\abs{\nabla u}\nabla\theta}dV_{g}\\
            &\leq\left(8\pi+\frac{1}{2}\overline{m}\Cr{const:L2Grad_u_bound}(\overline{m})\right)^{\frac12}\left(\frac{1}{2}\Cr{const:L2Grad_u_bound}(\overline{m})\|(6-R_{g})^{+}\|_{L^{2}(g)}\right)^{\frac12},
            \end{split}
        \end{align}
        establishing inequality \eqref{eq:u-theta1}.

        To prove \eqref{eq:u-theta2}, we multiply and divide by $\abs{\nabla u}^{\frac{1}{2}}$ and then use 
%        \begin{equation}
%            \int_{\Sp^3}\csc(\theta)\abs{\overline{\nabla}^2u}dV_{g}=\int_{\Sp^{3}}\left(\csc(\theta)\abs{\nabla u}^{\frac12}\right)\left(\frac{\abs{\overline{\nabla}^2u}}{\abs{\nabla u}^{\frac12}}\right)dV_{g}.
%        \end{equation}
        H{\"o}lder's inequality:
        \begin{align}
            \int_{\Sp^3}\csc(\theta)\abs{\overline{\nabla}^2u}dV_{g}\leq&\left(\int_{\Sp^3}\csc^{2}(\theta)\abs{\nabla u}dV_{g}\right)^{\frac12}
            \left(\int_{\Sp^{3}}\frac{\abs{\overline{\nabla}^2u}^2}{\abs{\nabla u}}dV_{g}\right)^{\frac12}.
        \end{align}
        The proof is finished by applying inequalities \eqref{eq:csc2_norm_grad_u} and \eqref{eq:mass_formula}.
    \end{proof}

    Now we observe a reformulation of \eqref{eq:mass_formula} which is useful for comparing the gradient of $u$ to the gradient of the Llarull potential on the round sphere, namely $\sin(\theta)$.
    \begin{lem}\label{lem:GradRatio_u-sin_bound}
        There exists a constant $\Cl{const:GradRatio_u-sin_bound}(\overline{m})$ so that for any $(\Sp^3,g)$ in $\Sc(V,D,\overline{m},\Lambda)$ with Llarull potential $u$ centered at $\pm p$, we have
        \begin{equation}
            \int_{\Sp^3}\abs{\nabla\frac{\abs{\nabla u}}{\sin(\theta)}}dV_{g}\leq \Cr{const:GradRatio_u-sin_bound}(\overline{m})\left\|\left(6-R_{g}\right)^{+}\right\|^{\frac12}_{L^{2}(g)}.
        \end{equation}
    \end{lem}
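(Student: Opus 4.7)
The plan is to reduce this estimate to the two integral inequalities already established in Lemma \ref{lem:u-theta_alignment}, namely the weighted integrals of $\abs{\nabla u+\abs{\nabla u}\nabla\theta}$ and $\abs{\overline{\nabla}^2u}$. The key idea is that differentiating $\abs{\nabla u}/\sin(\theta)$ should naturally produce precisely these two quantities, because the ``spacetime'' structure $\overline{\nabla}^2u=\nabla^2u+\cot(\theta)\abs{\nabla u}g$ is designed so that the $\cot(\theta)$ correction cancels against the derivative of $1/\sin(\theta)$ up to the alignment error $\nabla u+\abs{\nabla u}\nabla\theta$.

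Concretely, on the open set $\{\abs{\nabla u}>0\}$ (where $u$ is $C^{2,\alpha}$) I would start from the identity $\nabla\abs{\nabla u}=\nabla^2u(\nabla u,\cdot)/\abs{\nabla u}$ and then substitute $\nabla^2u=\overline{\nabla}^2u-\cot(\theta)\abs{\nabla u}g$ to rewrite it as
\begin{equation*}
\nabla\abs{\nabla u}=\frac{\overline{\nabla}^2u(\nabla u,\cdot)}{\abs{\nabla u}}-\cot(\theta)\nabla u.
\end{equation*}
Then a direct quotient rule calculation gives
\begin{equation*}
\nabla\frac{\abs{\nabla u}}{\sin(\theta)}=\frac{\overline{\nabla}^2u(\nabla u,\cdot)}{\sin(\theta)\abs{\nabla u}}-\frac{\cos(\theta)}{\sin^2(\theta)}\bigl(\nabla u+\abs{\nabla u}\nabla\theta\bigr),
\end{equation*}
where the $\cot(\theta)\nabla u$ term from $\nabla\abs{\nabla u}$ combined with the derivative of $\sin(\theta)^{-1}$ to produce exactly the alignment expression $\nabla u+\abs{\nabla u}\nabla\theta$. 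On the complementary set $\{\abs{\nabla u}=0\}$ both sides vanish almost everywhere, so the identity holds globally in a weak sense.

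Taking norms and using $\abs{\cos(\theta)}\leq 1$ yields the pointwise bound
\begin{equation*}
\left|\nabla\frac{\abs{\nabla u}}{\sin(\theta)}\right|\leq \csc(\theta)\,\abs{\overline{\nabla}^2u}+\csc^2(\theta)\,\abs{\nabla u+\abs{\nabla u}\nabla\theta},
\end{equation*}
and integrating over $\Sp^3$ reduces everything to \eqref{eq:u-theta1} and \eqref{eq:u-theta2}, each of which is already controlled by $\Cr{const:u-theta_alignment}(\overline{m})\|(6-R_g)^+\|_{L^2(g)}^{1/2}$. The only subtle point I anticipate is justifying the derivative computation at points where $\abs{\nabla u}$ vanishes, which I would handle by noting that $\abs{\nabla u}$ is Lipschitz with $\nabla\abs{\nabla u}=0$ almost everywhere on the zero set of $|\nabla u|$, so the quotient remains differentiable a.e.\ and the displayed identity extends measurably across all of $\Sp^3$.
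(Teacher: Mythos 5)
Your proof is correct and follows essentially the same route as the paper: the identity you derive for $\nabla(\abs{\nabla u}/\sin\theta)$ is algebraically identical to the paper's, and both proofs then apply Cauchy-Schwarz, the elementary bound $\abs{\cos\theta}\le 1$ (or equivalently $\abs{\cot\theta}\le\csc\theta$), and integrate using Lemma~\ref{lem:u-theta_alignment}. Your extra remark about the set $\{\abs{\nabla u}=0\}$ is a reasonable addition but not a departure in method.
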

    \begin{proof}
        A calculation gives the following equality:
        \begin{equation}
            \nabla\left(\frac{\abs{\nabla u}}{\sin(\theta)}\right)=\frac{\csc(\theta)}{\abs{\nabla u}}\left(\overline{\nabla}^2u(\cdot,\nabla u)-\cot(\theta)\abs{\nabla u}\left(\nabla u+\abs{\nabla u}\nabla\theta\right)\right).
        \end{equation}
        Taking the norm of both sides, applying the Cauchy-Schwarz inequality, and using the triangle inequality leads to the following inequality:
        \begin{equation}
            \abs{\nabla\frac{\abs{\nabla u}}{\sin(\theta)}}\leq\csc(\theta)\abs{\overline{\nabla}^2u}+\abs{\cot(\theta)}\csc(\theta)\abs{\nabla u+\abs{\nabla u}\nabla \theta}.
        \end{equation}
        We now observe that $\abs{\cot{(\theta)}}\leq\csc(\theta)$, which leads to the following inequality:
        \begin{equation}
            \abs{\nabla\frac{\abs{\nabla u}}{\sin(\theta)}}\leq\csc(\theta)\abs{\overline{\nabla}^2u}+\csc^{2}(\theta)\abs{\nabla u+\abs{\nabla u}\nabla\theta}.
        \end{equation}
        Upon integrating both sides of the above and using Lemma \ref{lem:u-theta_alignment}, we get the result.
    \end{proof}

    Now the bound on the Cheeger constant in Definition \ref{defn:FamilyOfMetrics} combined with the integral gradient bound of Lemma \ref{lem:GradRatio_u-sin_bound} leads to an estimate which again compares the gradient of $u$ to the gradient of the Llarull potential on the round sphere, $\sin(\theta)$.
    \begin{cor}\label{cor:Ratio_grad_u-grad_sin-Poincare}
        There exists a constant $\Cl{const:Ratio_grad_u-grad_sin-Poincare}(\overline{m},\Lambda)$ so that for any $(\Sp^3,g)$ in $\Sc(V,D,\overline{m},\Lambda)$ with Llarull potential $u$ centered at $\pm p$, there exists a a constant $a(g)\geq0$ such that
        \begin{equation}
            \int_{\Sp^3}\abs{\frac{\abs{\nabla u}}{\sin(\theta)}-a(g)}dV_{g}\leq\Cr{const:Ratio_grad_u-grad_sin-Poincare}(\overline{m},\Lambda)\left\|\left(6-R_{g}\right)^{+}\right\|^{\frac12}_{L^{2}(g)}
        \end{equation}
        and
        \begin{equation}
            \abs{\set{x:\abs{\frac{\abs{\nabla u(x)}}{\sin(\theta(x))}-a(g)}>\tau}}_g\leq\frac{1}{\tau}\Cr{const:Ratio_grad_u-grad_sin-Poincare}(\overline{m},\Lambda)\left\|\left(6-R_{g}\right)^{+}\right\|^{\frac12}_{L^{2}(g)}.
        \end{equation}
    \end{cor}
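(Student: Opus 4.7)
The plan is to directly apply the Poincaré-type inequality coming from the Cheeger constant lower bound to the function $f := \abs{\nabla u}/\sin(\theta)$, and then pass to the distributional estimate by Chebyshev's inequality.

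First I would verify that $f \in W^{1,1}(g)$ so that it is an admissible test function in Definition \ref{def:Sobolev}. The $L^1$-control on $\nabla f$ is immediate from Lemma \ref{lem:GradRatio_u-sin_bound}. For $f$ itself, I would note that $f = \csc(\theta)\abs{\nabla u}$ and apply Cauchy--Schwarz to split
\begin{equation}
\int_{\Sp^3}\csc(\theta)\abs{\nabla u}\,dV_g \leq \Bigl(\int_{\Sp^3}\csc^2(\theta)\abs{\nabla u}\,dV_g\Bigr)^{1/2}\Bigl(\int_{\Sp^3}\abs{\nabla u}\,dV_g\Bigr)^{1/2},
\end{equation}
with the two factors bounded via \eqref{eq:csc2_norm_grad_u} and Lemma \ref{lem:L2Grad_u_bound} (or Corollary \ref{cor:L1_norm_grad_u}) respectively.

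Next I would invoke the identity $IN_1(g) = SN_1(g)$ recalled in Section \ref{subsec:Cheeger} together with hypothesis $(v)$ of Definition \ref{defn:FamilyOfMetrics}. Letting $a(g) \in \mathbb{R}$ be a minimizer of $k \mapsto \int_{\Sp^3}|f - k|\,dV_g$ (which exists by the remarks after Definition \ref{def:Sobolev}), the Sobolev constant inequality gives
\begin{equation}
\int_{\Sp^3}\Bigl|\frac{\abs{\nabla u}}{\sin(\theta)} - a(g)\Bigr|\,dV_g \leq \frac{1}{\Lambda}\int_{\Sp^3}\Bigl|\nabla\frac{\abs{\nabla u}}{\sin(\theta)}\Bigr|\,dV_g,
\end{equation}
and chaining this with Lemma \ref{lem:GradRatio_u-sin_bound} yields the first asserted inequality with $\Cr{const:Ratio_grad_u-grad_sin-Poincare}(\overline{m},\Lambda) := \Cr{const:GradRatio_u-sin_bound}(\overline{m})/\Lambda$. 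The non-negativity $a(g) \geq 0$ follows because $f \geq 0$ and any $L^1$-median of a non-negative function is non-negative; if one prefers, replacing $a(g)$ by $\max(a(g),0)$ can only decrease $\int|f-a(g)|\,dV_g$. The second inequality is then a one-line application of Chebyshev's inequality to the first.

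I do not anticipate a genuine obstacle here: the only delicate point is checking $f \in W^{1,1}(g)$ in the presence of the $\csc(\theta)$ singularity at the poles, which is handled by the a priori estimate \eqref{eq:csc2_norm_grad_u}. Everything else is a direct chain of the already-established bounds with the Cheeger--Sobolev inequality.
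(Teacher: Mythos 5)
Your proposal is correct and follows essentially the same route as the paper: Cheeger--Sobolev constant applied to $f=\abs{\nabla u}/\sin\theta$, with $\nabla f$ controlled by Lemma~\ref{lem:GradRatio_u-sin_bound} and the measure estimate by Chebyshev. The only difference is the way $f\in L^1(g)$ is checked -- you use Cauchy--Schwarz with \eqref{eq:csc2_norm_grad_u} and the $L^1$ gradient bound, while the paper uses the global Lipschitz bound $\abs{\nabla u}\le C(g)$ from Lemma~\ref{lem:existence} together with integrability of $\csc\theta$; both are valid, and your version even gives a bound depending only on $\overline m$ rather than on $g$.
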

    \begin{proof}
        The second inequality follows from the first by an application of Chebyshev's inequality, see for instance \cite{folland1999}.
        Let us first begin by observing that $\tfrac{\abs{\nabla u}}{\sin(\theta)}$ is in $L^{1}(g)$. To see this, apply Lemma \ref{lem:existence} to find a constant $C$ (depending on $g$) so that $\tfrac{\abs{\nabla u}}{\sin(\theta)} \le C\csc(\theta)$, and then note that $\csc(\theta)$ is integrable on any Riemannian $3$-sphere. In light of Lemma \ref{lem:GradRatio_u-sin_bound}, it follows that $\frac{\abs{\nabla u}}{\sin(\theta)}\in W^{1,1}(g)$. Next, let's recall that $IN_{1}(g)=SN_{1}(g)$, and that by definition
        \begin{equation}
            SN_{1}(g)=\inf\set{\frac{\int_{\Sp^3}\abs{\nabla f}dV_{g}}{\inf_{k\in\R}\int_{\Sp^3}\abs{f-k}dV_{g}}:f\in W^{1,1}(g)}.
        \end{equation}
         Therefore, we have that
        \begin{equation}
            \inf_{k\in\R}\int_{\Sp^3}\abs{\frac{\abs{\nabla u}}{\sin(\theta)}-k}dV_{g}\leq \frac{1}{SN_{1}(g)}\int_{\Sp^3}\abs{\nabla\frac{\abs{\nabla u}}{\sin(\theta)}}dV_{g}.
        \end{equation}
        Let $a(g)$ be any nonnegative constant which achieves the infimum above; the constant can be chosen to be non-negative since $\frac{\abs{\nabla u}}{\sin\theta}$ is non-negative.
        As $SN_{1}(g)=IN_{1}(g)\geq\Lambda$, the result now follows from Lemma \ref{lem:GradRatio_u-sin_bound}.
    \end{proof}
    Again, by the bound on the Cheeger constant in Definition \ref{defn:FamilyOfMetrics} we find an estimate which compares $u$ to the  Llarull potential on the round sphere, $\cos(\theta)$.
    \begin{cor}\label{cor:u_cos_alignment}
        There exists a constant $\Cl{const:u_cos_alignment}(\overline{m},\Lambda)$ so that for any $(\Sp^3,g)$ in $\Sc(V,D,\overline{m},\Lambda)$ with Llarull potential $u$ centered at $\pm p$, there exists constants $a(g)$ and $\sigma(g)$ such that
        \begin{equation}
            \int_{\Sp^{3}}\abs{u-a(g)\cos(\theta)-\sigma(g)}dV_{g}\leq \Cr{const:u_cos_alignment}(\overline{m},\Lambda)\left\|\left(6-R_{g}\right)^{+}\right\|_{L^{2}(g)}^{\frac12}
        \end{equation}
        and
        \begin{equation}
            \abs{\set{x:\abs{u(x)-a(g)\cos(\theta(x))-\sigma(g)}>\tau}}\leq\frac{1}{\tau}\Cr{const:u_cos_alignment}(\overline{m},\Lambda)\left\|\left(6-R_{g}\right)^{+}\right\|_{L^{2}(g)}^{\frac12}.
        \end{equation}
    \end{cor}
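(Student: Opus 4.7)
The plan is to derive the desired $L^1$-bound on $u - a(g)\cos(\theta) - \sigma(g)$ by applying the Cheeger-based Poincar\'e inequality (in the form $IN_1(g) = SN_1(g) \geq \Lambda$) to the auxiliary function $f := u - a(g)\cos(\theta)$, where $a(g)$ is the nonnegative constant produced in Corollary \ref{cor:Ratio_grad_u-grad_sin-Poincare}. The main computational step is controlling $\|\nabla f\|_{L^{1}(g)}$, after which the Sobolev inequality gives a candidate $\sigma(g)$ and the claimed bound.

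First, I would compute $\nabla f = \nabla u + a(g)\sin(\theta)\nabla\theta$ and perform the algebraic splitting
\begin{equation}
\nabla f = \bigl(\nabla u + \abs{\nabla u}\nabla\theta\bigr) + \sin(\theta)\nabla\theta\left(a(g) - \frac{\abs{\nabla u}}{\sin(\theta)}\right),
\end{equation}
obtained by adding and subtracting $\abs{\nabla u}\nabla\theta$. Taking norms and using $\abs{\nabla\theta}\leq 1$ (a consequence of $g\geq g_{\Sp^3}$) together with $\sin(\theta)\leq 1$, integration over $\Sp^3$ yields
\begin{equation}
\int_{\Sp^3}\abs{\nabla f}dV_{g}\leq\int_{\Sp^3}\abs{\nabla u+\abs{\nabla u}\nabla\theta}dV_{g}+\int_{\Sp^3}\abs{\frac{\abs{\nabla u}}{\sin(\theta)}-a(g)}dV_{g}.
\end{equation}
The first integral is bounded by $\Cr{const:u-theta_alignment}(\overline{m})\|(6-R_{g})^{+}\|_{L^{2}(g)}^{1/2}$ via inequality \eqref{eq:u-theta1} (discarding $\csc^2(\theta)\geq 1$), and the second is bounded by $\Cr{const:Ratio_grad_u-grad_sin-Poincare}(\overline{m},\Lambda)\|(6-R_{g})^{+}\|_{L^{2}(g)}^{1/2}$ directly from Corollary \ref{cor:Ratio_grad_u-grad_sin-Poincare}.

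Next, since $u\in\mathrm{Lip}(\Sp^3)$ by Lemma \ref{lem:existence} and $\cos(\theta)$ is smooth, we have $f\in W^{1,1}(g)$. Invoking the identity $IN_{1}(g)=SN_{1}(g)\geq\Lambda$ recalled in Subsection \ref{subsec:Cheeger}, select a constant $\sigma(g)$ which achieves the infimum defining $SN_{1}(g)$ for $f$. Then
\begin{equation}
\int_{\Sp^3}\abs{u-a(g)\cos(\theta)-\sigma(g)}dV_{g}\leq\frac{1}{IN_{1}(g)}\int_{\Sp^3}\abs{\nabla f}dV_{g}\leq\frac{C(\overline{m},\Lambda)}{\Lambda}\left\|(6-R_{g})^{+}\right\|_{L^{2}(g)}^{1/2},
\end{equation}
which is the first claimed estimate. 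The second inequality then follows from the first by Chebyshev's inequality, exactly as in Corollary \ref{cor:Ratio_grad_u-grad_sin-Poincare}.

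The proof is routine once the decomposition of $\nabla f$ above is identified; both resulting pieces are already bounded by preceding results. The only subtlety is recognizing that the weighted alignment estimate $\nabla u\approx -\abs{\nabla u}\nabla\theta$ from Lemma \ref{lem:u-theta_alignment} and the near-constancy of $\abs{\nabla u}/\sin(\theta)$ from Corollary \ref{cor:Ratio_grad_u-grad_sin-Poincare} combine cleanly under the pointwise bounds $\abs{\nabla\theta},\sin(\theta)\leq 1$ to yield an unweighted $L^1$-estimate on the gradient of $u-a(g)\cos(\theta)$, which is precisely the object to which the Cheeger-Sobolev inequality applies.
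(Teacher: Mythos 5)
Your proof is correct and follows essentially the same path as the paper's: you set $f=u-a(g)\cos\theta$, perform the identical add-and-subtract decomposition of $\nabla f$ into $\nabla u+\abs{\nabla u}\nabla\theta$ plus a multiple of $\frac{\abs{\nabla u}}{\sin\theta}-a(g)$, bound each piece by Lemma \ref{lem:u-theta_alignment} and Corollary \ref{cor:Ratio_grad_u-grad_sin-Poincare} using $\abs{\nabla\theta},\sin\theta\le 1$ and $\csc^2\theta\ge 1$, and finish with the Cheeger--Sobolev inequality and Chebyshev. No meaningful differences.
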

    \begin{proof}
        The second inequality will follow from the first after an application of Chebyshev's inequality.
        Let $a(g)$ be as in Corollary \ref{cor:Ratio_grad_u-grad_sin-Poincare}, and set $f=u-a\cos(\theta)$. Once we establish an estimate for the $L^{1}$ norm of $\nabla f$, the proof is the same as in Corollary \ref{cor:Ratio_grad_u-grad_sin-Poincare}.

        To begin, we see that
        \begin{equation}
            \nabla f=\nabla u+a(g)\sin(\theta)\nabla\theta.
        \end{equation}
        Adding and subtracting $\abs{\nabla u}\nabla\theta$, and then taking the norm gives
        \begin{equation}\label{eq:uacor1}
            \abs{\nabla f}\leq\abs{\nabla u+\abs{\nabla u}\nabla\theta}+\abs{a(g)\sin(\theta)-\abs{\nabla u}}\abs{\nabla\theta}.
        \end{equation}
        Using the fact that $\abs{\nabla\theta}\leq 1$, integrating \eqref{eq:uacor1} gives us
        \begin{equation}
            \int_{\Sp^3}\abs{\nabla f}dV_g\leq\int_{\Sp^3}\abs{\nabla u+\abs{\nabla u}\nabla\theta}dV_{g}+\int_{\Sp^3}\sin(\theta)\abs{a(g)-\frac{\abs{\nabla u}}{\sin(\theta)}}dV_{g}.
        \end{equation}
        Now we may apply Lemma \ref{lem:u-theta_alignment} and Corollary \ref{cor:Ratio_grad_u-grad_sin-Poincare} with the trivial bounds $\sin(\theta)\leq 1$ and $\csc^2(\theta)\geq1$ to see that
        \begin{align}
        \begin{split}
            \int_{\Sp^3}\abs{\nabla f}dV_{g}&\leq 
            \int_{\Sp^3}\csc^2(\theta)\abs{\nabla u+\abs{\nabla u}\nabla\theta}dV_{g}+\int_{\Sp^3}\abs{a(g)-\frac{\abs{\nabla u}}{\sin(\theta)}}dV_{g}\\
            &\leq \Cr{const:u-theta_alignment}(\overline{m})\left\|\left(6-R_{g}\right)^{+}\right\|^{\frac12}_{L^{2}(g)}+\Cr{const:Ratio_grad_u-grad_sin-Poincare}(\overline{m},\Lambda)\left\|\left(6-R_{g}\right)^{+}\right\|^{\frac12}_{L^{2}(g)}.
        \end{split}
        \end{align}
    \end{proof}
    %%%%%%%%%%%%%%%%%%%%%%%%%%%%%%%%%%%%%%%%%%%%%%%%%%%%%%%%%%%%%%%%%
    \section{Polar Estimates} \label{sect: polar estimates}
    Now that we have established several $L^p$ and $W^{1,1}$ bounds for Llarull potentials, and have shown that they don't deviate too much from functions of the form $a\cos(\theta)$, we would like to know more about the constant $a$. In order to do this, we proceed with a finer analysis of the behavior of Llarull potentials near the poles.
    \begin{lem}\label{lem:csc2_norm_grad_u_bndry}
        There exists a constant $\Cl{const:csc2_norm_grad_u_bndry}(\overline{m})$ so that for any $(\Sp^3,g)$ in $\Sc(V,D,\overline{m},\Lambda)$ with Llaurull potential $u$ centered at $\pm p$, there exists constants $\sigma_{p}$ and $\sigma_{-p}$ in $[\frac{\pi}{8},\frac{\pi}{4}]$, depending on $g$, such that
        \begin{align}
            &\int_{\partial B^{\Sp}(p,\sigma_p)}\abs{\nabla u}d A_{g}\leq\Cr{const:csc2_norm_grad_u_bndry}(\overline{m}),
            \\
            &\int_{\partial B^{\Sp}(-p,\sigma_{-p})}\abs{\nabla u}d A_{g}\leq\Cr{const:csc2_norm_grad_u_bndry}(\overline{m}).
        \end{align}
    \end{lem}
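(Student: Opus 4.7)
My plan is a standard averaging argument via the coarea formula. The key analytic input is the uniform $L^1$ bound on $|\nabla u|$ supplied by Corollary \ref{cor:L1_norm_grad_u}, and the key geometric input is the almost-everywhere bound $|\nabla \theta_{\pm p}|_g \leq 1$, which follows from the fact that $\theta_{\pm p}$ is $1$-Lipschitz with respect to the round metric together with the distance comparison $g \geq g_{\Sp^3}$.

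I would apply the coarea formula to $\theta = \theta_p$ with integrand $|\nabla u|$ on the annular region $\theta^{-1}([\pi/8, \pi/4])$, on which $\theta_p$ is smooth with non-vanishing gradient (the cut locus of $p$ in the round sphere consists only of $-p$, which lies well outside this shell). Combined with $|\nabla \theta|_g \leq 1$ and Corollary \ref{cor:L1_norm_grad_u}, the coarea identity yields
\begin{equation*}
    \int_{\pi/8}^{\pi/4} \left( \int_{\partial B^{\Sp}(p,r)} |\nabla u| \, dA_g \right) dr \leq \int_{\Sp^3} |\nabla u| \, dV_g \leq \Cr{const:L1_norm_grad_u}(\overline{m}).
\end{equation*}

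Since the integrand in the outer integral is non-negative and the interval $[\pi/8, \pi/4]$ has positive length $\pi/8$, the mean value theorem for integrals (equivalently, a pigeonhole argument) delivers some $\sigma_p \in [\pi/8, \pi/4]$ at which the inner integral is at most $\tfrac{8}{\pi}\Cr{const:L1_norm_grad_u}(\overline{m})$. The argument for $\sigma_{-p}$ is identical after replacing $\theta_p$ by $\theta_{-p}$ throughout. I do not anticipate any substantive obstacle in this argument---its content is elementary averaging once the uniform $L^1$ gradient control from Section \ref{sect: global estimates} is in hand.
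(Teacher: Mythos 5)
Your proof is correct and follows essentially the same route as the paper: apply the coarea formula on the shell $\theta^{-1}([\pi/8,\pi/4])$ with the Jacobian factor $|\nabla\theta|\leq 1$, and extract a good slice by averaging. The only cosmetic difference is that the paper keeps the $\csc^{2}(\theta)$ weight from \eqref{eq:csc2_norm_grad_u} throughout, arriving at the slightly sharper intermediate bound $\csc^{2}(\sigma_{p})\int_{\partial B^{\Sp}(p,\sigma_p)}|\nabla u|\,dA_{g}\leq\tfrac{8}{\pi}\bigl(8\pi+\tfrac12\overline{m}\,\Cr{const:L2Grad_u_bound}(\overline{m})\bigr)$ before discarding $\csc^{2}(\sigma_p)\geq 1$, whereas you discard the weight earlier by citing Corollary \ref{cor:L1_norm_grad_u} directly; since that corollary is itself derived from the same two ingredients, the content is identical.
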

    \begin{proof}
        From inequality \eqref{eq:csc2_norm_grad_u} and Lemma \ref{lem:L2Grad_u_bound} we have
        \begin{equation}\label{MassConsequence}
            \int_{B^{\Sp}(p,\tfrac{\pi}{4})\setminus B^{\Sp}(p,\tfrac{\pi}{8})}\csc^2(\theta)\abs{\nabla u}dV_{g}\leq\int_{\Sp^3}\csc^2(\theta)\abs{\nabla u}dV_{g}\leq 8\pi+\frac{1}{2}m(g)\Cr{const:L2Grad_u_bound}(\overline{m}).
        \end{equation}
        Applying the co-area formula to the left most integral above gives us the following equality: 
        \begin{align}
        \begin{split}
            \int_{\tfrac{\pi}{8}}^{\tfrac{\pi}{4}}\csc^2(t)\left(\int_{\partial B^{\Sp}(p,t)}\abs{\nabla u}d A_{g}\right)dt&=\int_{B^{\Sp}(p,\tfrac{\pi}{4})\setminus B^{\Sp}(p,\tfrac{\pi}{8})}\csc^2(\theta)\abs{\nabla u}\abs{\nabla\theta}dV_{g}
            \\
            &\leq8\pi+\frac{1}{2}\overline{m}\Cr{const:L2Grad_u_bound}(\overline{m}),
        \end{split}
        \end{align}
        where the last inequality comes from the fact that $\abs{\nabla\theta}\leq 1$ and \eqref{MassConsequence}. By the Mean Value Theorem, it follows that there exists $\sigma_{p}$ in $[\tfrac{\pi}{8},\tfrac{\pi}{4}]$ such that
        \begin{equation}
            \csc^2(\sigma_{p})\int_{\partial B^{\Sp}(p,\sigma_p)}\abs{\nabla u}dA_{g}\leq\frac{8}{\pi}\left(8\pi+\frac{1}{2}\overline{m}\Cr{const:L2Grad_u_bound}(\overline{m})\right).
        \end{equation}
        An analogous argument centered around the pole $-p$ establishes the second inequality.
    \end{proof}

    Since $\csc(\theta)$ blows up at the poles, inequality \eqref{eq:csc2_norm_grad_u} suggests that the gradients of Llarull potentials should be relatively small near the poles. We will now determine in which sense the above intuition is true. The first step is to show that we can improve the bound in inequality \eqref{eq:csc2_norm_grad_u} as follows.
    \begin{lem}\label{lem:csc3_norm_grad_u}
        There is a constant $\Cl{const:csc3_norm_grad_u}(\overline{m})$ so that for any $(\Sp^3,g)$ in $\Sc(V,D,\overline{m},\Lambda)$ with Llarull potential $u$ centered at $\pm p$ and any radius $r\leq\tfrac{\pi}{8}$, we have
        \begin{align}
            &\int_{B^{\Sp}(p,r)}\csc^{3}(\theta)\abs{\nabla u}dV_{g}\leq\Cr{const:csc3_norm_grad_u}(\overline{m}),\label{eq-lemcsc3:1}
            \\
            &\int_{B^{\Sp}(-p,r)}\csc^{3}(\theta)\abs{\nabla u}dV_{g}\leq\Cr{const:csc3_norm_grad_u}(\overline{m})\label{eq-lemcsc3:2}.
        \end{align}
    \end{lem}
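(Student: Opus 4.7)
My plan is to reduce the claim to bounding $\int_{B^\Sp(p,\sigma_p)} \csc^2(\theta)\cot(\theta)\abs{\nabla u}\,dV_g$, where $\sigma_p \in [\pi/8, \pi/4]$ is provided by Lemma \ref{lem:csc2_norm_grad_u_bndry}. This suffices because on $B^\Sp(p,\pi/8)$ we have $\csc^3(\theta)\abs{\nabla u} = \sec(\theta)\csc^2(\theta)\cot(\theta)\abs{\nabla u} \leq \sec(\pi/8)\csc^2(\theta)\cot(\theta)\abs{\nabla u}$, and the containments $B^\Sp(p,r) \subseteq B^\Sp(p,\pi/8) \subseteq B^\Sp(p,\sigma_p)$ hold for any $r \leq \pi/8$. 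The argument at $-p$ giving \eqref{eq-lemcsc3:2} is handled symmetrically.

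To establish the $\csc^2\cot$ bound, I test the Llarull potential equation against $\csc^2(\theta)$ on the annulus $D_\epsilon = B^\Sp(p,\sigma_p) \setminus B^\Sp(p,\epsilon)$, where $u$ is classically $C^{2,\alpha}$ and all quantities are bounded. Using $\nabla \csc^2(\theta) = -2\csc^2(\theta)\cot(\theta)\nabla\theta$, a standard integration by parts produces
\begin{align*}
    &\int_{D_\epsilon} \csc^2(\theta)\cot(\theta)\bigl[\abs{\nabla u} + 2(\abs{\nabla u} + g(\nabla u, \nabla\theta))\bigr]\,dV_g \\
    &\qquad = -\csc^2(\sigma_p)\int_{\partial B^\Sp(p,\sigma_p)} g(\nabla u, \nu)\,dA_g + \csc^2(\epsilon)\int_{\partial B^\Sp(p,\epsilon)} g(\nabla u, \nu)\,dA_g,
\end{align*}
where $\nu$ denotes the unit $g$-normal pointing away from $p$ on both boundary components. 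The integrand on the left is pointwise non-negative (since $\abs{\nabla \theta}\leq 1$ implies $\abs{\nabla u} + g(\nabla u, \nabla\theta) \geq 0$), with the first summand exactly the quantity I wish to control. The outer boundary term is handled immediately by Lemma \ref{lem:csc2_norm_grad_u_bndry}.

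The main obstacle is the inner boundary term, since $\csc^2(\epsilon)$ blows up as $\epsilon \to 0$ and there is no obvious pointwise bound on $\abs{\nabla u}$ along $\partial B^\Sp(p,\epsilon)$. My key observation is that this term in fact has a definite sign: it is non-positive. Setting $G(\epsilon) := \int_{\partial B^\Sp(p,\epsilon)} g(\nabla u, \nu)\,dA_g$, applying the divergence theorem on $B^\Sp(p,\epsilon) \setminus B^\Sp(p,\epsilon')$ for $0 < \epsilon' < \epsilon$ and substituting the PDE yields
\begin{equation*}
    G(\epsilon) - G(\epsilon') = -3\int_{B^\Sp(p,\epsilon)\setminus B^\Sp(p,\epsilon')} \cot(\theta)\abs{\nabla u}\,dV_g \leq 0,
\end{equation*}
so $G$ is monotone non-increasing in $\epsilon$. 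The global Lipschitz regularity of $u$ from Lemma \ref{lem:existence} together with smoothness of $g$ at $p$ give $|G(\epsilon')| \leq \|\nabla u\|_{L^\infty(g)}|\partial B^\Sp(p,\epsilon')|_g \to 0$ as $\epsilon' \to 0^+$. Hence $G(\epsilon) \leq 0$ for every $\epsilon > 0$, and the inner boundary term $\csc^2(\epsilon) G(\epsilon) \leq 0$.

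Dropping the non-positive inner boundary term produces the $\epsilon$-uniform estimate $\int_{D_\epsilon} \csc^2(\theta)\cot(\theta)\abs{\nabla u}\,dV_g \leq \csc^2(\pi/8)\Cr{const:csc2_norm_grad_u_bndry}(\overline{m})$. Monotone convergence as $\epsilon \to 0^+$ then delivers the desired bound over $B^\Sp(p,\sigma_p)$, completing the proof.
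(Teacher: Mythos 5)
Your proof is correct, and takes a genuinely different route from the paper's. The paper tests the PDE against $\csc^{1+\varepsilon}(\theta)$ for $\varepsilon\in[0,1)$ on the annuli $B^{\Sp}(p,\sigma_p)\setminus B^{\Sp}(p,\tfrac1i)$, controls the inner boundary term with the crude Lipschitz estimate $\bigl|\int_{\partial B^{\Sp}(p,1/i)}\csc^{1+\varepsilon}(\theta)\tfrac{\partial u}{\partial n}\,dA_g\bigr|\le CC'\csc^{1+\varepsilon}\bigl(\tfrac{1}{iC'}\bigr)\sin^2\bigl(\tfrac1i\bigr)$, which tends to zero as $i\to\infty$ precisely because $\varepsilon<1$, and then recovers the $\csc^3$ bound by sending $\varepsilon\to1$ via a second application of monotone convergence — so there is a double limit, first $i\to\infty$ and then $\varepsilon\to1$. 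You instead test directly against $\csc^2(\theta)$, which corresponds to $\varepsilon=1$ and is exactly what the paper's crude boundary estimate cannot absorb, and you compensate with the observation that the inner flux $G(\epsilon)=\int_{\partial B^{\Sp}(p,\epsilon)}g(\nabla u,\nu)\,dA_g$ has a definite sign: by the divergence theorem on the interior annulus and $\Delta_g u=-3\cot(\theta)|\nabla u|\le0$ for $\theta<\tfrac{\pi}{2}$, the function $G$ is non-increasing, and the global Lipschitz bound forces $G(\epsilon')\to0$ as $\epsilon'\to0^+$, hence $G(\epsilon)\le0$ and the term $\csc^2(\epsilon)G(\epsilon)$ can simply be discarded. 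The algebraic grouping also differs slightly: you write $3|\nabla u|+2g(\nabla u,\nabla\theta)=|\nabla u|+2\bigl(|\nabla u|+g(\nabla u,\nabla\theta)\bigr)$ and keep both non-negative summands, whereas the paper bounds the cross term by Cauchy--Schwarz and uses the margin $3-(1+\varepsilon)=2-\varepsilon\ge1$. Your sign observation eliminates the auxiliary parameter $\varepsilon$ and the second monotone-convergence pass, which is a real simplification of the argument; both proofs ultimately rely on the smoothness of $g$ at $p$ (via the Lipschitz bound and the vanishing of $|\partial B^{\Sp}(p,\epsilon')|_g$), with the $g$-dependent constants disappearing in the final bound.
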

    \begin{proof}
        Since the proofs of \eqref{eq-lemcsc3:1} and \eqref{eq-lemcsc3:2} are nearly identical, we will focus our attention near $p$ and prove only \eqref{eq-lemcsc3:1}. 
        Let $\sigma_{p}$ be as in Lemma \ref{lem:csc2_norm_grad_u_bndry}, set $\Omega_{i}=B^{\Sp}(p,\sigma_p)\setminus B^{\Sp}\left(p,\tfrac{1}{i}\right)$, and denote the outward unit normal to $\partial \Omega_i$ by $n$.
        According to Lemma \ref{lem:existence}, the Llarull potential has a Lipschitz bound which only depends on the metric $g$. Therefore, we have
        \begin{align}\label{eq:Grad_ui_bndry}
            \abs{\nabla u}\Big\vert_{\partial B^\Sp\left(p,\tfrac{1}{i}\right)}\leq C,\quad          \abs{\nabla u}\Big\vert_{\partial B^\Sp\left(-p,\tfrac{1}{i}\right)}\leq C
        \end{align}
        for some constant $C$.
        
        Fix $\varepsilon$ to be any number in the range $[0,1)$. We may multiply both sides of the Llarull potential equation for $u_i$ by $\csc^{1+\varepsilon}(\theta)$ to obtain
        \begin{equation}
            \csc^{1+\varepsilon}(\theta)\Delta u_i=-3\cos(\theta)\csc^{2+\varepsilon}(\theta)\abs{\nabla u_i}.
        \end{equation}
        Integrating the above equality over the domain $\Omega_i$, integrating by parts, and then rearranging terms gives
        \begin{align}
        \begin{split}
            \int_{\partial B^{\Sp}(p,\sigma_p)\cup\partial B^\Sp\left(p,\tfrac{1}{i}\right)}\csc^{1+\varepsilon}(\theta)\frac{\partial u}{\partial n}dA_{g}=&\int_{\Omega_i}g\left(\nabla\csc^{1+\varepsilon}(\theta),\nabla u\right)dV_{g}
            \\
            &-3\int_{\Omega_i}\cos(\theta)\csc^{2+\varepsilon}(\theta)\abs{\nabla u}dV_{g}.
        \end{split}
        \end{align}
        Calculating $\nabla\csc^{1+\varepsilon}(\theta)$ and further rearranging terms gives us the following:
        \begin{align}
        \begin{split}
            \csc^{1+\varepsilon}(\sigma_{p})\int_{\partial B^{\Sp}(p,\sigma_p)}\frac{\partial u}{\partial n}dA_{g}=&-\int_{\partial B^{\Sp}\left(p,\tfrac{1}{i}\right)}\csc^{1+\varepsilon}(\theta)\frac{\partial u}{\partial n}dA_g
            \\
            &-(1+\varepsilon)\int_{\Omega_i}\cos(\theta)\csc^{2+\varepsilon}(\theta)g\left(\nabla\theta,\nabla u\right)dV_{g}
            \\
            &-3\int_{\Omega_i}\cos(\theta)\csc^{2+\varepsilon}(\theta)\abs{\nabla u}dV_{g}.
        \end{split}
        \end{align}
        Taking the absolute value of both sides, using Lemma \ref{lem:csc2_norm_grad_u_bndry}, using the fact that $\sigma_p\in[\tfrac{\pi}{8},\tfrac{\pi}{4}]$, and applying the triangle inequality yields 
        \begin{align}
        \begin{split}
            &\Cl{const:intermediary}(\overline{m})\\
            &\geq\abs{3\int_{\Omega_{i}}\cos(\theta)\csc^{2+\varepsilon}(\theta)\abs{\nabla u}dV_{g}}
            \\
            &-\abs{(1+\varepsilon)\int_{\Omega_{i}}\cos(\theta)\csc^{2+\varepsilon}(\theta)g\left(\nabla\theta,\nabla u\right)dV_{g}+\int_{\partial B^\Sp(p,\tfrac{1}{i})}\csc^{1+\varepsilon}(\theta)\frac{\partial u}{\partial n}dA_g}
        \end{split}
        \end{align}
        where $\Cr{const:intermediary}(\overline{m})$ is a constant depending only on $\overline{m}$.
        Then, we may estimate the second term on the right by the triangle inequality to find
        \begin{align}\label{eq:csc_grad_u_i_bound_1}
        \begin{split}
            \Cr{const:intermediary}(\overline{m})&\geq\abs{3\int_{\Omega_{i}}\cos(\theta)\csc^{2+\varepsilon}(\theta)\abs{\nabla u}dV_{g}}
            \\
            &-\abs{(1+\varepsilon)\int_{\Omega_{i}}\cos(\theta)\csc^{2+\varepsilon}(\theta)g\left(\nabla\theta,\nabla u\right)dV_{g}}
            \\
            &-\abs{\int_{\partial B^{\Sp}\left(p,\tfrac{1}{i}\right)}\csc^{1+\varepsilon}(\theta)\frac{\partial u}{\partial n}dA_g}.
        \end{split}
        \end{align}
        
        Before estimating further, we observe the following two inequalities. First, it follows from the Cauchy-Schwarz inequality, the fact that $\abs{\nabla\theta}_g\leq1$, and the fact that $\cos(\theta)$ is non-negative on $\Omega_i$, that we have
        \begin{align}\label{eq:intermediateest1}
        \begin{split}
            &\abs{(1+\varepsilon)\int_{\Omega_{i}}\cos(\theta)\csc^{2+\varepsilon}(\theta)g\left(\nabla\theta,\nabla u\right)dV_{g}}\\
            &\leq(1+\varepsilon)\int_{\Omega_{i}}\cos(\theta)\csc^{2+\varepsilon}(\theta)\abs{\nabla u}dV_{g}.
        \end{split}
        \end{align}
        Second, the gradient bound \eqref{eq:Grad_ui_bndry} implies
        \begin{equation}\label{eq:IntermediateEstimateEq}
            \abs{\int_{\partial B^{\Sp}\left(p,\tfrac{1}{i}\right)}\csc^{1+\varepsilon}(\theta)\frac{\partial u}{\partial n}dA_{g}}\leq C\int_{\partial B^\Sp\left(p,\tfrac{1}{i}\right)}\csc^{1+\varepsilon}(\theta)dA_{g}.
        \end{equation}
        There exists a constant $C'>0$ depending on $g$ such that $g_{\Sp^3}\leq g\leq C' g_{\Sp^3}$, which allows us to write $\csc(\theta(x))\leq\csc(d_{g}(p,x)/C')$, and $|\partial B^{\Sp}(p,\tfrac{1}{i})|_g\leq C'\sin^2(\tfrac{1}{i})$. Apply{\-}ing these statements to \eqref{eq:IntermediateEstimateEq} gives us
        \begin{equation}\label{eq:intermediateest2}
            \abs{\int_{\partial B^{\Sp}(p,\tfrac{1}{i})}\csc^{1+\varepsilon}(\theta)\frac{\partial u}{\partial n}dA_{g}}\leq 4\pi C C'\csc^{1+\varepsilon}\left(\frac{1}{iC'}\right)\sin^{2}\left(\frac{1}{i}\right).
        \end{equation}
        Combining \eqref{eq:intermediateest1} and \eqref{eq:intermediateest2} with \eqref{eq:csc_grad_u_i_bound_1} yields
        \begin{align}\label{eq:csc_grad_u_i_bound_2}
        \begin{split}
            \Cr{const:intermediary}(\overline{m})\geq&(2-\varepsilon)\int_{\Omega_{i}}\cos(\theta)\csc^{2+\varepsilon}(\theta)\abs{\nabla u}dV_{g}
            \\
            &-4\pi CC'\csc^{1+\varepsilon}\left(\frac{1}{iC'}\right)\sin^{2}\left(\frac{1}{i}\right).
        \end{split}
        \end{align}
    Applying the $\limsup$ to both sides of \eqref{eq:csc_grad_u_i_bound_2} as $i\to\infty$ and leveraging Lemma \ref{lem:csc2_norm_grad_u_bndry} gives
        \begin{align}\label{eq:postintermediate1}
        \begin{split}
            \Cr{const:intermediary}(\overline{m})\geq&(2-\varepsilon)\limsup_{i\rightarrow\infty}\int_{\Omega_{i}}\cos(\theta)\csc^{2+\varepsilon}(\theta)\abs{\nabla u_i}dV_{g}
            \\
            &-4\pi CC'\limsup_{i\rightarrow\infty}\csc^{1+\varepsilon}\left(\frac{1}{iC'}\right)\sin^{2}\left(\frac{1}{i}\right).
        \end{split}
        \end{align}
        Since $\varepsilon<1$, the last term on the right side of \eqref{eq:postintermediate1} is actually zero and, crucially, the constants depending on $g$ completely disappear. Rearranging terms, \eqref{eq:postintermediate1} implies
        \begin{equation}
            \Cr{const:intermediary}(\overline{m})\geq\limsup_{i\rightarrow\infty}\int_{\Omega_{i}}\cos(\theta)\csc^{2+\varepsilon}(\theta)\abs{\nabla u}dV_{g}.
        \end{equation}
        Finally, the location of the set $\Omega_i$ implies $\cos(\theta)\big\vert_{\Omega_{i}}\geq\cos\left(\frac{\pi}{4}\right)$, and so
        \begin{equation}\label{eq:csc_grad_u_i_bound_3}
            \sqrt{2}\Cr{const:intermediary}(\overline{m})\geq\limsup_{i\rightarrow\infty}\int_{\Omega_{i}}\csc^{2+\varepsilon}(\theta)\abs{\nabla u}dV_{g}\geq\int_{B^{\Sp}(p,\sigma_{p})}\csc^{2+\varepsilon}(\theta)\abs{\nabla u}dV_{g}.
        \end{equation}

    Noting that $\csc^{2+\varepsilon}(\theta)\abs{\nabla u}$ monotonically increases to $\csc^{3}(\theta)\abs{\nabla u}$ as $\varepsilon$ approaches $1$, the Monotone Convergence Theorem may be applied to find
        \begin{align}
        \begin{split}
            \sqrt{2}\Cr{const:intermediary}(\overline{m})&\geq\lim_{\varepsilon\rightarrow 1}\int_{B^{\Sp}(p,\sigma_{p})}\csc^{2+\varepsilon}(\theta)\abs{\nabla u}dV_{g}
            \\
            &=\int_{B^{\Sp}(p,\sigma_{p})}\csc^{3}(\theta)\abs{\nabla u}dV_{g}.
        \end{split}
        \end{align}
        The result now follows since $\sigma_{p}\geq\tfrac{\pi}{8}$.
    \end{proof}

    The lemma above suggests that around the poles, the gradient of the Llarull potential $u$ should be nearly bounded, and in fact close to zero. Since $u(\pm p)=\pm1$, this means that $u$ should be roughly equal to $\pm1$ around the poles. Here we show that if we take averages with respect to the round metric, then the above reasoning holds.
    \begin{lem}\label{lem:avg_polar_control_of_u}
        %Let $dA_{\Sp}$ denote the area form of the unit $2$-sphere. 
        There exists a constant $\Cl{const:avg_polar_control_of_u}(\overline{m})$ so that for any $(\Sp^3,g)$ in $\Sc(V,D,\overline{m},\Lambda)$ with Llarull potential $u$ centered at $\pm p$ and $t\in(0,\tfrac{\pi}{8}]$, we have
        \begin{align}
            1-\frac{1}{4\pi}\int_{\partial B^\Sp(p,t)}udA_{\Sp}&\leq\Cr{const:avg_polar_control_of_u}(\overline{m})\sin(t)\label{eq:avg_polar_control}
            \\
            -1-\frac{1}{4\pi}\int_{\partial B^\Sp(-p,t)}udA_{\Sp}&\geq-\Cr{const:avg_polar_control_of_u}(\overline{m})\sin(t)
        \end{align}
        where $dA_\Sp$ denotes the area form of the unit $2$-sphere.
    \end{lem}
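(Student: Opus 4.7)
The plan is to apply the fundamental theorem of calculus along the round radial geodesics $\gamma_\omega(s) := \exp_p^{g_{\Sp^3}}(s\omega)$, average over $\omega\in S^2$, and then leverage Lemma \ref{lem:csc3_norm_grad_u} together with a measure comparison coming from $g \geq g_{\Sp^3}$. Set $\bar u(t) := \tfrac{1}{4\pi}\int_{\partial B^\Sp(p,t)}u\,dA_\Sp$, so the goal of \eqref{eq:avg_polar_control} is to show $1 - \bar u(t) \leq C\sin(t)$.

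First I would work in round geodesic polar coordinates $(s,\omega)\in[0,\pi)\times S^2$ centered at $p$, in which $\theta(x) = s$ and the coordinate radial field $\partial_s$ coincides with $\nabla_{g_{\Sp^3}}\theta$ along each $\gamma_\omega$. Since $dV_{g_{\Sp^3}} = \sin^2(s)\,ds\,d\omega$, one has $ds\,d\omega = \csc^2(\theta)\,dV_{g_{\Sp^3}}$. Because $u(p) = 1$, the fundamental theorem of calculus along $\gamma_\omega$ gives $1 - u(\gamma_\omega(t)) = -\int_0^t \partial_s u\,ds$; averaging over $S^2$ and applying Fubini yields
\[
4\pi(1 - \bar u(t)) = \int_{B^\Sp(p,t)}\csc^2(\theta)(-\partial_s u)\,dV_{g_{\Sp^3}}.
\]

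Next I would bound the integrand pointwise. Cauchy--Schwarz in $g$ gives $|\partial_s u| \leq |\nabla u|_g\sqrt{g_{ss}}$, where $g_{ss} := g(\partial_s,\partial_s)$. The crucial ingredient is the measure comparison $\sqrt{g_{ss}}\,dV_{g_{\Sp^3}} \leq dV_g$, which I would verify by simultaneously diagonalizing $g$ against $g_{\Sp^3}$ at a point: this produces eigenvalues $\lambda_1,\lambda_2,\lambda_3 \geq 1$ (since $g \geq g_{\Sp^3}$), and writing $\partial_s = \sum_i a^ie_i$ in the resulting basis gives $\sum_i(a^i)^2 = g_{\Sp^3}(\partial_s,\partial_s) = 1$, whence
\[
g_{ss} = \sum_i \lambda_i(a^i)^2 \leq \max_i\lambda_i \leq \prod_i\lambda_i = \frac{\det g}{\det g_{\Sp^3}}
\]
using $\lambda_i \geq 1$; square-rooting and multiplying by $dV_{g_{\Sp^3}}$ proves the claim. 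Combining this with $\csc^2(\theta) = \csc^3(\theta)\sin(\theta) \leq \csc^3(\theta)\sin(t)$ on $B^\Sp(p,t)$ (valid since $t \leq \pi/8$) and Lemma \ref{lem:csc3_norm_grad_u}, we have
\[
4\pi(1 - \bar u(t)) \leq \int_{B^\Sp(p,t)}\csc^2(\theta)|\nabla u|_g\,dV_g \leq \sin(t)\int_{B^\Sp(p,\pi/8)}\csc^3(\theta)|\nabla u|_g\,dV_g \leq \Cr{const:csc3_norm_grad_u}(\overline{m})\sin(t),
\]
giving \eqref{eq:avg_polar_control} with $\Cr{const:avg_polar_control_of_u}(\overline{m}) = \Cr{const:csc3_norm_grad_u}(\overline{m})/(4\pi)$. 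The second inequality, concerning $-p$, follows by applying this argument to the Llarull potential $-u$, for which the roles of the poles are swapped.

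The main obstacle I anticipated is controlling the factor $|\partial_s|_g = \sqrt{g_{ss}}$ that appears when bounding the round-radial derivative of $u$ by its $g$-gradient: since $g \geq g_{\Sp^3}$ only yields $\sqrt{g_{ss}} \geq 1$, no pointwise uniform upper bound is available, so a naive application of the FTC estimate fails. The measure comparison $\sqrt{g_{ss}}\,dV_{g_{\Sp^3}} \leq dV_g$ is the integrated substitute and is the key technical input, absorbing the troublesome factor into the $g$-volume form and thereby making Lemma \ref{lem:csc3_norm_grad_u} directly applicable.
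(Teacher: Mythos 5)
Your proof is correct, and it reduces to the same key input as the paper — namely the $\csc^{3}$ gradient bound of Lemma \ref{lem:csc3_norm_grad_u} — but the technical route is meaningfully different and arguably cleaner. The paper differentiates the spherical average $\bar u(t)$ by flowing the level set $\partial B^{\Sp}(p,t)$ along the $g$-normalized $\theta$-gradient $\partial_\theta=\nabla\theta/|\nabla\theta|^2$, estimates $|d\bar u/dt|$ by $\int \abs{\nabla u}/\abs{\nabla\theta}\,dA_\Sp$ via Cauchy--Schwarz, then converts to a $g$-volume integral using the area comparison $dA_g\geq\sin^2(t)\,dA_\Sp$ together with the co-area formula (in which the $\abs{\nabla\theta}$ factor cancels). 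You instead run the fundamental theorem of calculus directly along round radial geodesics in polar coordinates, bound $\abs{\partial_s u}\leq\abs{\nabla u}_g\sqrt{g_{ss}}$, and absorb the factor $\sqrt{g_{ss}}$ with the pointwise volume-form comparison $\sqrt{g_{ss}}\,dV_{g_{\Sp^3}}\leq dV_g$, which you prove by simultaneous diagonalization of $g$ against $g_{\Sp^3}$ using $g\geq g_{\Sp^3}$ (the chain $g_{ss}\leq\max_i\lambda_i\leq\prod_i\lambda_i=\det g/\det g_{\Sp^3}$ is correct since each $\lambda_i\geq1$). This sidesteps the flow and co-area machinery entirely; your diagonalization inequality plays the structural role of the paper's area-form comparison plus co-area Jacobian. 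A minor remark: the reduction of the $-p$ inequality via the observation that $-u$ is a Llarull potential with the roles of $\pm p$ swapped is valid (one checks $\cot(\theta_{-p})=-\cot(\theta_p)$), though you could equally well just repeat the FTC argument centered at $-p$ using $u(-p)=-1$ and the second estimate in Lemma \ref{lem:csc3_norm_grad_u}, as the paper does.
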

    \begin{proof}
        Once again, since the calculations will be the same at both poles, we will focus on the pole $p$. Let $F_{t}$ be the flow generated by the vector field $\partial_{\theta}=\frac{1}{\abs{\nabla\theta}^2}\nabla\theta$. For any $t>0$ we have the following equality
        \begin{align}
        \begin{split}
             \frac{d}{dt}\int_{\partial B^{\Sp}(p,t)}udA_{\Sp}&=\frac{d}{ds}\Big\vert_{s=0}\int_{F_{s}\left(\partial B^{\Sp}(p,t)\right)}udA_{\Sp}\\
             &=\int_{\partial B^{\Sp}(p,t)}du\left(\Dot{F}_{t}\right)dA_{\Sp}=\int_{\partial B^{\Sp}(p,t)}g\left(\nabla u,\frac{\nabla\theta}{\abs{\nabla\theta}^2}\right)dA_{\Sp},\label{eq:Flow2}
             \end{split}
        \end{align}
        where $\Dot{F}_t$ denotes the partial derivative in the $t$ variable. We emphasize that the area form present in the above expressions is not the one induced on $\partial B^{\Sp}(p,t)$, but the area form of the round unit $2$-sphere.
       
        Taking absolute values of \eqref{eq:Flow2}, and then applying the Cauchy-Schwarz inequality results in
        \begin{equation}
            \abs{\frac{d}{dt}\int_{\partial B^{\Sp}(p,t)}udA_{\Sp}}\leq\int_{\partial B^{\Sp}(p,t)}\frac{\abs{\nabla u}}{\abs{\nabla\theta}}dA_{\Sp}.
        \end{equation}
        Multiplying and dividing the right hand side by $\sin^{3}(t)$ shows that
        \begin{align}
        \begin{split}
            \abs{\frac{d}{dt}\int_{\partial B^{\Sp}(p,t)}udA_{\Sp}}&\leq\sin(t)\int_{\partial B^{\Sp}(p,t)}\csc^{3}(t)\frac{\abs{\nabla u}}{\abs{\nabla\theta}}\sin^{2}(t)dA_{\Sp}\\
            {}&\leq \sin(t)\int_{\partial B^{\Sp}(p,t)}\csc^{3}(t)\frac{\abs{\nabla u}}{\abs{\nabla\theta}}dA_{g},\label{eq:Flowarea}
        \end{split}
        \end{align}
        where we have used $dA_{g}\vert_{\partial B^{\Sp}(p,t)}\geq\sin^{2}(t)dA_{\Sp}$ which itself follows from the fact $g\geq g_{\Sp^3}$.
        Integrating both sides of \eqref{eq:Flowarea} in $t$ and bounding $\sin(t)$ by $\sin(s)$, where $s\leq\tfrac{\pi}{4}$, leads us to
        \begin{align}
        \begin{split}
            \int_{t_0}^{s}\abs{\frac{d}{dt}\int_{\partial B^{\Sp}(p,t)}udA_{\Sp}}dt&\leq\sin(s)\int_{t_0}^{s}\int_{\partial B^{\Sp}(p,t)}\csc^{3}(t)\frac{\abs{\nabla u}}{\abs{\nabla\theta}}dA_{g}dt
            \\
            &=\sin(s)\int_{\theta^{-1}[t_0,s]}\csc^{3}(\theta)\abs{\nabla u}dV_{g},
        \end{split}
        \end{align}
        where the last equality comes from the co-area formula. Therefore, from the Fundamental Theorem of Calculus and Lemma \ref{lem:csc3_norm_grad_u}, we get
        \begin{equation}\label{eq:FlowFTC}
            \abs{\int_{\partial B^{\Sp}(p,s)}udA_{\Sp}-\int_{\partial B^{\Sp}(p,t_0)}udA_{\Sp}}\leq\Cr{const:csc3_norm_grad_u}(\overline{m})\sin(s).
        \end{equation}
        Finally, since $u(p)=1$ and $u$ is continuous, it follows that
        \begin{equation}
            \lim_{t_0\rightarrow0}\int_{\partial B^{\Sp}(p,t_0)}udA_{\Sp}=4\pi.
        \end{equation}
        Combining this with \eqref{eq:FlowFTC} and the fact that $u\leq1$, and then rearranging terms, gives the desired inequality \eqref{eq:avg_polar_control}.
    \end{proof}
    We now proceed to apply Lemma \ref{lem:avg_polar_control_of_u} to estimate the volume with respect to $g_{\Sp^3}$ of $B^{\Sp}(\pm p,r)$ intersected with sub- and super-level sets of $u$, respectively.
\begin{cor}\label{cor:u_polar_lower_bound}
    There exists a constant $\Cl{const:u_polar_lower_bound}(\overline{m})$ so that for any $(\Sp^3,g)$ in $\Sc(V,D,\overline{m},\Lambda)$ with Llarull potential $u$ centered at $\pm p$, the following hold for $r\in[0,\frac{\pi}{8}]$ and $\gamma\in[0,1)$:
        \begin{align}
            &\abs{B^{\Sp}(p,r)\cap\set{u\leq\gamma}}_{g_{\Sp^3}}\leq\frac{4\pi\Cr{const:u_polar_lower_bound}(\overline{m})}{1-\gamma}\int_{0}^{r}\sin^3(t)dt
            \\
            &\abs{B^\Sp(-p,r)\cap\set{u\geq-\gamma}}_{g_{\Sp^3}}\leq\frac{4\pi\Cr{const:u_polar_lower_bound}(\overline{m})}{1-\gamma}\int_{0}^{r}\sin^3(t)dt.
    \end{align}
\end{cor}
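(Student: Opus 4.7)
The plan is to derive this corollary by a straightforward Chebyshev-type argument combined with the coarea formula on the round sphere, using Lemma \ref{lem:avg_polar_control_of_u} as the essential input. I will focus on the first inequality; the second follows identically at the antipodal pole by considering $-u$, which satisfies $-u \leq 1$ everywhere by the maximum principle.

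First, I will unpack Lemma \ref{lem:avg_polar_control_of_u} by rearranging:
\begin{equation*}
\int_{\partial B^{\Sp}(p,t)}\left(1-u\right)dA_{\Sp} \leq 4\pi\,\Cr{const:avg_polar_control_of_u}(\overline{m})\sin(t),
\end{equation*}
where $dA_\Sp$ is the unit $2$-sphere (solid angle) area form; here I use that $\int_{\partial B^\Sp(p,t)}dA_\Sp = 4\pi$ under this convention. Since $|u|\leq 1$ by the maximum principle, the integrand $1-u$ is pointwise non-negative, so the above estimate remains valid after restricting the domain of integration.

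Next, I will apply Chebyshev's inequality at each spherical level. On the slice $\partial B^\Sp(p,t)\cap\{u\leq\gamma\}$ one has $1-u\geq 1-\gamma>0$, hence
\begin{equation*}
(1-\gamma)\,\bigl|\partial B^\Sp(p,t)\cap\{u\leq\gamma\}\bigr|_{\Sp} \leq \int_{\partial B^\Sp(p,t)\cap\{u\leq\gamma\}}(1-u)\,dA_\Sp \leq 4\pi\,\Cr{const:avg_polar_control_of_u}(\overline{m})\sin(t),
\end{equation*}
where $|\cdot|_\Sp$ denotes the solid angle measure. Dividing by $1-\gamma$ gives the required pointwise-in-$t$ estimate on the angular measure of the bad slice.

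Finally, I will assemble the volume bound using the coarea formula applied to the round distance function $\theta$. Since $|\nabla^{\Sp}\theta|=1$ and the induced area form on $\partial B^\Sp(p,t)$ with respect to $g_{\Sp^3}$ equals $\sin^2(t)\,dA_\Sp$, the coarea formula yields
\begin{equation*}
\bigl|B^\Sp(p,r)\cap\{u\leq\gamma\}\bigr|_{g_{\Sp^3}} = \int_0^r \sin^2(t)\,\bigl|\partial B^\Sp(p,t)\cap\{u\leq\gamma\}\bigr|_\Sp\,dt.
\end{equation*}
Substituting the Chebyshev bound into the integrand gives the claimed estimate with $\Cr{const:u_polar_lower_bound}(\overline{m})=\Cr{const:avg_polar_control_of_u}(\overline{m})$. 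There is no serious obstacle here: the only point requiring care is a consistent normalization of $dA_\Sp$ so that the left-hand side of Lemma \ref{lem:avg_polar_control_of_u} represents a genuine angular average, which then combines cleanly with the round coarea factor $\sin^2(t)$ to produce the $\sin^3(t)$ weight in the final conclusion.
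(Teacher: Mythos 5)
Your proof is correct and follows essentially the same route as the paper: both rely on Lemma \ref{lem:avg_polar_control_of_u}, apply a Chebyshev-type estimate on each spherical slice $\partial B^{\Sp}(p,t)$, and integrate against the round coarea factor $\sin^2(t)\,dt$ to obtain the $\sin^3$ weight. Your phrasing of the slice estimate (applying Chebyshev directly to the non-negative function $1-u$) is slightly more compact than the paper's version (which splits the slice integral over $\{u\leq\gamma\}$ and $\{u>\gamma\}$ and then uses that these are complementary), but the two are the same argument.
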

\begin{proof}
    The proof is nearly identical for the pole $-p$, and so we focus only on the pole $p$. As usual, $dA_\Sp$ denotes the area form of the unit $2$-sphere. Rearranging the result of Lemma \ref{lem:avg_polar_control_of_u} gives
    \begin{equation}\label{eq:FlowCor0}
            \frac{1}{4\pi}\int_{\partial B^{\Sp}(p,t)}udA_\Sp\geq1-\Cr{const:avg_polar_control_of_u}(\overline{m})\sin(t).
    \end{equation}
    We also have
    \begin{align}
    \begin{split}
        \frac{1}{4\pi}\int_{\partial B^{\Sp}(p,t)}udA_\Sp&=\frac{1}{4\pi}\int_{\partial B^{\Sp}(p,t)\cap\set{u\leq\gamma}}u dA_\Sp+\frac{1}{4\pi}\int_{\partial B^{\Sp}(p,t)\cap\set{u>\gamma}}udA_\Sp\\
        &\leq\frac{\gamma}{4\pi}\abs{\partial B^{\Sp}(p,t)\cap\set{u\leq\gamma}}_{g_{\Sp^2}}+\frac{1}{4\pi}\int_{\partial B^{\Sp}(p,t)\cap\set{u>\gamma}}udA_\Sp.\label{eq:FlowCor1}
        \end{split}
    \end{align}
    Using the fact that $u\leq1$ to estimate the last term of \eqref{eq:FlowCor1}, inequality \eqref{eq:FlowCor0} implies
    \begin{equation}
            \frac{\gamma}{4\pi}\abs{\partial B^{\Sp}(p,t)\cap\set{u\leq\gamma}}_{g_{\Sp^2}}+\frac{1}{4\pi}\abs{\partial B^{\Sp}(p,t)\cap\set{u>\gamma}}_{g_{\Sp^2}}\geq 1-\Cr{const:avg_polar_control_of_u}(\overline{m})\sin(t).
    \end{equation}
    Since $\set{u>\gamma}$ is the complement of $\set{u\leq\gamma}$, the last inequality can be rewritten as
    \begin{equation}
            1+\frac{\gamma-1}{4\pi}\abs{\partial B^{\Sp}(p,t)\cap \set{u\leq\gamma}}_{g_{\Sp^2}}\geq1-\Cr{const:avg_polar_control_of_u}(\overline{m})\sin(t).
    \end{equation}
    A final rearrangement of the above line, and then an integration, gives the result.
    \end{proof}
    
    %%%%%%%%%%%%%%%%%%%%%%%%%%%%%%%%%%%%%%%%%%%%%%%%%%%%%%%%%%%%%%%%%
    
\section{Volume Above Distance Below Convergence} \label{sect: VADB convergence}

Our goal in this section is to finish the proof of Theorem \ref{thm-Main} by applying Theorem \ref{thm-VADB}. This argument focuses on studying sets wherein the Llarull potential nearly satisfies certain analytical features of the model situation on the round sphere.
    
First, some notation: fix $(\Sp^3,g)$ in $\Sc(V,D,\overline{m},\Lambda)$ with a Llarull potential $u$ centered at give poles $\pm p$. Let $a(g)$ and $\sigma(g)$ be the constants appearing in Corollary \ref{cor:u_cos_alignment}. Given $\tau>0$, we consider the set 
    \begin{equation}\label{eq:thegoodset1}
         \widetilde{E}_{\tau,g}:=\set{x\in\Sp^3:\abs{u(x)-a(g)\cos{\theta(x)}-\sigma(g)}\leq\tau}.
     \end{equation} 
We note that $\sigma(g)$ in \eqref{eq:thegoodset1} is an integration constant and plays only a background role. On the other hand, the constant $a(g)$ determines the precise comparison to the model situation on the round sphere.     
    
    \begin{lem}\label{lem:u_amplitude_lower_bound}
        There exists constants $\Cl{const:effective_a_lower_bound_1}(\overline{m})$ and $\Cl{const:effective_a_lower_bound_2}(\overline{m},\Lambda)$ so that for any $(\Sp^3,g)$ in $\Sc(V,D,\overline{m},\Lambda)$ with Llarull potential $u$ centered at $\pm p$, the constant $a(g)$ given by Corollary \ref{cor:u_cos_alignment} satisfies the bound
        \begin{equation}
        a(g)\geq 1-\Cr{const:effective_a_lower_bound_1}(\overline{m})\left\|\left(6-R_{g}\right)^{+}\right\|^{\frac{1}{12}}_{L^{2}(g)}-\Cr{const:effective_a_lower_bound_2}(\overline{m},\Lambda)\left\|\left(6-R_{g}\right)^{+}\right\|^{\frac{1}{4}}_{L^{2}(g)}.
    \end{equation}

    \end{lem}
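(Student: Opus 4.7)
The plan is to locate a point $x$ in a small $g_{\Sp^3}$-ball around $p$ and a point $y$ in a small $g_{\Sp^3}$-ball around $-p$ at which simultaneously (a) the polar estimate of Corollary \ref{cor:u_polar_lower_bound} forces $u$ to be close to $\pm1$, and (b) the Chebyshev consequence of Corollary \ref{cor:u_cos_alignment} forces $u$ to be close to $a(g)\cos\theta+\sigma(g)$. Subtracting the resulting linear inequalities at $x$ and $y$ eliminates the integration constant $\sigma(g)$ and produces the desired lower bound on $a(g)$. Write $\varepsilon:=\|(6-R_g)^+\|^{1/2}_{L^2(g)}$; we may assume $\varepsilon$ is small, as otherwise the conclusion is trivial given $a(g)\geq 0$ from Corollary \ref{cor:Ratio_grad_u-grad_sin-Poincare}.

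Introduce parameters $\delta,\tau,r\in(0,1)$ to be chosen. On the polar side, applying Corollary \ref{cor:u_polar_lower_bound} with $\gamma=1-\delta$ together with $|B^\Sp(p,r)|_{g_{\Sp^3}}=4\pi\int_0^r\sin^2 t\,dt$ yields
\begin{equation*}
|B^\Sp(p,r)\cap\{u>1-\delta\}|_{g_{\Sp^3}}\geq 4\pi\int_0^r\sin^2 t\,dt-\frac{4\pi C(\overline{m})}{\delta}\int_0^r\sin^3 t\,dt,
\end{equation*}
which, upon taking $r$ equal to a small multiple of $\delta$ (with multiplier depending only on $\overline{m}$) and $r$ small, is bounded below by $c(\overline{m})r^3$. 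The analogous statement holds near $-p$ with $\{u<-1+\delta\}$. On the other side, applying Chebyshev's inequality to the $L^1$ bound in Corollary \ref{cor:u_cos_alignment} and observing $|A|_g\geq|A|_{g_{\Sp^3}}$ (which follows from $g\geq g_{\Sp^3}$) gives $|\Sp^3\setminus\widetilde{E}_{\tau,g}|_{g_{\Sp^3}}\leq C(\overline{m},\Lambda)\varepsilon/\tau$.

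To ensure $\widetilde{E}_{\tau,g}\cap B^\Sp(p,r)\cap\{u>1-\delta\}$ (and its $-p$ counterpart) has positive $g_{\Sp^3}$-measure, I impose $c(\overline{m})r^3>C(\overline{m},\Lambda)\varepsilon/\tau$, i.e., $\delta^3\tau\gtrsim\varepsilon$ after $r\sim\delta$. To obtain the two-term split in the claim with correct constant dependencies, I then set $\delta=K_1(\overline{m})\varepsilon^{1/6}$ and $\tau=K_2(\overline{m},\Lambda)\varepsilon^{1/2}$ so that $\delta^3\tau=K_1^3 K_2\varepsilon$ satisfies the constraint. Picking $x$ in the resulting non-empty intersection near $p$ and $y$ similarly near $-p$, the two membership conditions yield $a(g)\cos\theta(x)+\sigma(g)>1-\delta-\tau$ and $a(g)\cos\theta(y)+\sigma(g)<-1+\delta+\tau$. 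Subtracting eliminates $\sigma(g)$; using $a(g)\geq 0$ together with $\cos\theta(x)-\cos\theta(y)\leq 2$ then gives $a(g)\geq 1-\delta-\tau$, which matches the stated bound after substitution.

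The principal obstacle is parameter balancing: the polar estimate depends only on $\overline{m}$, while the Cheeger-based Chebyshev bound involves $\Lambda$, and these must split cleanly into an $\varepsilon^{1/6}$ term (equivalently $\|(6-R_g)^+\|^{1/12}$) governed by $\overline{m}$ and an $\varepsilon^{1/2}$ term (equivalently $\|(6-R_g)^+\|^{1/4}$) governed by $\overline{m},\Lambda$. A secondary delicacy is keeping the polar estimate in $g_{\Sp^3}$-measure while transferring the Chebyshev estimate, which is phrased in $g$-measure, via $g\geq g_{\Sp^3}$; once this is done, the subtraction producing $a(g)\geq 1-\delta-\tau$ is purely algebraic.
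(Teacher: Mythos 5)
Your proposal is correct and follows essentially the same approach as the paper's proof: both locate points near each pole in the intersection of a small metric ball, a level set $\{u>\gamma\}$ (resp. $\{u<-\gamma\}$), and $\widetilde{E}_{\tau,g}$, using Corollary \ref{cor:u_polar_lower_bound} and Corollary \ref{cor:u_cos_alignment} to show this intersection is non-empty, then subtract the membership inequalities to eliminate $\sigma(g)$; the parameter choices $r\sim\delta\sim\varepsilon^{1/6}$ and $\tau\sim\varepsilon^{1/2}$ also match the paper's (up to constants).
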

    \begin{proof}
        For any $r\in[0,\frac{\pi}{2}]$, it is a simple fact that $\sin(r)\geq\frac{2}{\pi}r$. In particular, $\abs{B^{\Sp}(p,r)}_{g_{\Sp^3}}\geq\tfrac{16}{3\pi}r^3$. For $\tau>0$ and $\gamma <1$ to be determined, we study the region $\widetilde{E}_{\tau,g}\cap\left\{u>\gamma\right\}\cap B^{\Sp}(p,r)$. Using the sub-additivity of measures, the volume of this set with respect to the round metric satisfies
        \begin{equation}
        \begin{split}
            \abs{\widetilde{E}_{\tau,g}\cap\left\{u>\gamma\right\}\cap B^{\Sp}(p,r)}_{g_{\Sp^3}}&\geq\abs{B^{\Sp}(p,r)}_{g_{\Sp^3}}
            \\&\quad-\abs{\left\{u\leq\gamma\right\}\cap B^{\Sp}(p,r)}_{g_{\Sp^3}}
            -\abs{\widetilde{E}_{\tau,g}^c}_{g_{\Sp^3}}.
            \end{split}
        \end{equation}
        It follows from Corollary \ref{cor:u_polar_lower_bound} and Corollary \ref{cor:u_cos_alignment}, with the fact that $g\geq g_{\Sp^3}$, that
        \begin{align}\label{eq:abound1}
            \begin{split}
                \abs{\widetilde{E}_{\tau,g}\cap\left\{u>\gamma\right\}\cap B^{\Sp}(p,r)}_{g_{\Sp^3}}\geq&\tfrac{16}{3\pi}r^3-\frac{4\pi\Cr{const:u_polar_lower_bound}(\overline{m})}{1-\gamma}\int_{0}^{r}\sin^3(t)dt
                \\
                &-\frac{\Cr{const:u_cos_alignment}(\overline{m},\Lambda)}{\tau}\left\|(6-R_{g})^{+}\right\|^{\frac12}_{L^{2}(g)}.\\
                \geq&\tfrac{16}{3\pi}r^3-\frac{\pi\Cr{const:u_polar_lower_bound}(\overline{m})}{1-\gamma}r^{4}\\
                &-\frac{\Cr{const:u_cos_alignment}(\overline{m},\Lambda)}{\tau}\left\|(6-R_{g})^{+}\right\|^{\frac12}_{L^{2}(g)}
            \end{split}
        \end{align}
        where we have used $\sin(t)\leq t$ in the last equality.
        A similar argument working near $-p$ shows
        \begin{equation}\label{eq:abound2}
        \begin{split}
            \abs{\widetilde{E}_{\tau,g}\cap\left\{u<-\gamma\right\}\cap B^{\Sp}(-p,r)}_{g_{\Sp^3}}\geq&\tfrac{16}{3\pi}r^3-\frac{\pi\Cr{const:u_polar_lower_bound}(\overline{m})}{1-\gamma}r^{4}
            \\&-\frac{\Cr{const:u_cos_alignment}(\overline{m},\Lambda)}{\tau}\left\|(6-R_{g})^{+}\right\|^{\frac12}_{L^{2}(g)}.           
        \end{split}
        \end{equation}
        
        If $\frac{\pi\Cr{const:u_polar_lower_bound}(\overline{m})}{1-\gamma}r^{4}<\tfrac{16}{6\pi}r^3$ and $\frac{\Cr{const:u_cos_alignment}(\overline{m},\Lambda)}{\tau}\left\|(6-R_{g})^{+}\right\|^{\frac12}_{L^{2}(g)}<\tfrac{16}{6\pi}r^3$, then the sets appearing in the left side of \eqref{eq:abound1} and \eqref{eq:abound2} must have positive measure. We now specify parameters $r$, $\tau$, $\gamma$ so that $\widetilde{E}_{\tau,g}\cap\left\{u>\gamma\right\}\cap B^{\Sp}(p,r)$ and $\widetilde{E}_{\tau,g}\cap\left\{u<-\gamma\right\}\cap B^{\Sp}(-p,r)$ are not empty: choose 
        \begin{align}
        \begin{split}
            &r=\left\|(6-R_{g})^{+}\right\|^{\frac{1}{12}}_{L^{2}(g)}, \quad \tau=\frac{12\pi\Cr{const:u_cos_alignment}(\overline{m},\Lambda)\|(6-R_{g})^{+}\|^{\frac14}_{L^{2}(g)}}{16}, \\
            &\gamma=1-\frac{12\pi^2\Cr{const:u_polar_lower_bound}(\overline{m})r}{16}.
        \end{split}
        \end{align}
        Pick $x^{+}$ and $x^{-}$ in the regions $\widetilde{E}_{\tau,g}\cap\left\{u>\gamma\right\}\cap B^{\Sp}(p,r)$ and $\widetilde{E}_{\tau,g}\cap\left\{u<-\gamma\right\}\cap B^{\Sp}(-p,r)$, respectively. Then, by definition of these sets, we have that
        \begin{equation}\label{eq:alower1}
            \begin{split}
                2\tau&\geq u(x^{+})-a(g)\cos(\theta(x^{+}))-\sigma(g)\\
            {}&\quad\quad-\left(u(x^{-})-a(g)\cos(\theta(x^{-}))-\sigma(g)\right)
            \\
            &\geq 2\gamma+a(g)\left(\cos(\theta(x^{-}))-\cos(\theta(x^{+}))\right)
            \\
            &\geq 2\gamma-2a(g),
            \end{split}
        \end{equation}
        where we have used the fact that $a(g)\geq 0$, $\cos(\theta(x^{+}))\leq 1$, and $\cos{\theta(x^{-})}\geq-1$.
        Given our choices of $\tau$ and $\gamma$, rearranging \eqref{eq:alower1} finishes the proof.
    \end{proof}

Having established control on $a(g)$, we now introduce the set of primary interest:
\begin{equation}\label{eq:thegoodset2}
  E_{\tau,g}= \set{x\in\Sp^3:\abs{\frac{\abs{\nabla u(x)}}{\sin(\theta(x))}-a(g)}\le\tau}.
\end{equation}
Informally, for small $\tau$ the sets $E_{\tau,g}$ are well-controlled and allow us to effectively compare volumes measured using $g$ to those measured with $g_{\Sp^3}$. Instead of analyzing these regions directly, for $t>0$ we consider
\begin{equation}\label{eq:thegoodset3}
    E_{\tau,g,t}=E_{\tau,g}\cap\left(\Sp^3\setminus\left(B^{\Sp}(p,t)\cup B^{\Sp}(-p,t)\right)\right)
\end{equation} 
which is the portion of $E_{\tau,g}$ lying away from the poles, see Figure \ref{fig-Etau}.

\begin{figure}[h] 
   \center{\includegraphics[width=.5\textwidth]{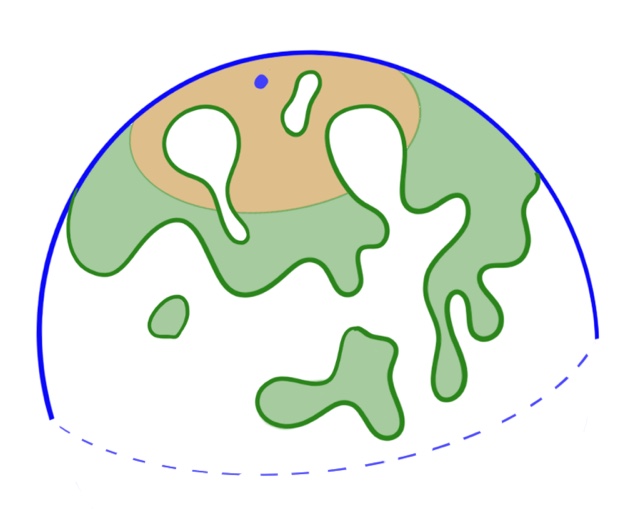}}
   \begin{picture}(0,0)
\put(-120,118){\Large{$p$}}
\put(-207,117){{\Large{$E_{\tau,g}$}}}
\put(-190,110){\vector(2,-1){38}}
\put(-184,117){\vector(4,-1){42}}
\put(-230,65){\Large{$E_{\tau,g,t}$}}
\put(-208,72){\vector(4,1){50}}
\end{picture}
\caption{The set $E_{\tau,g}$ is pictured in orange and green. The portion appearing in green represents $E_{\tau,g,t}$.}
   \label{fig-Etau}
\end{figure}

\begin{lem}\label{lem:vol_Etaut0}
    Assume $\overline{m}\leq1$. There is a constant $\Cl{const:vol_Etaut0}(\overline{m},V,\Lambda)$ so that for any $(\Sp^3,g)$ in $\Sc(V,D,\overline{m},\Lambda)$ with a Llarull potential $u$ centered at $\pm p$, the following holds
    \begin{equation}\label{eq:volEtaut0}
        0\leq|E_{\tau,g,t}|_{g}-|E_{\tau,g,t}|_{g_{\Sp^3}}\leq \Cr{const:vol_Etaut0}(\overline{m},V,\Lambda)\left\|\left(6-R_{g}\right)^{+}\right\|_{L^{2}(g)}^{1/48}
    \end{equation}
    for $\tau=\left\|\left(6-R_{g}\right)^{+}\right\|_{L^{2}(g)}^{1/4}$ and $t=\left\|\left(6-R_{g}\right)^{+}\right\|_{L^{2}(g)}^{1/48}$.

\end{lem}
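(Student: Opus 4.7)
The lower bound $|E_{\tau,g,t}|_g \ge |E_{\tau,g,t}|_{g_{\Sp^3}}$ is immediate from $g \ge g_{\Sp^3}$, which gives $dV_g \ge dV_{g_{\Sp^3}}$ pointwise. The substantive content is the upper bound. Write $m := \left\|(6-R_g)^+\right\|_{L^2(g)}^{1/2}$, so $\tau = m^{1/2}$, $t = m^{1/24}$, and the target error is of order $m^{1/24}$.

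\textbf{Step 1 (pointwise lower bound on $\abs{\nabla u}_g$).} Combining the defining condition of $E_{\tau,g}$ with $a(g)\ge 1 - Cm^{1/6}$ from Lemma \ref{lem:u_amplitude_lower_bound} (using $\overline{m}\le 1$) yields $\abs{\nabla u}_g \ge (1 - C_1 m^{1/6})\sin(\theta)$ pointwise on $E_{\tau,g,t}$. Since $\sin(\theta) \ge \sin(t) \gtrsim m^{1/24}$ there, it follows that $\abs{\nabla u}_g \gtrsim m^{1/24}$ on $E_{\tau,g,t}$.

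\textbf{Step 2 (closeness of $\abs{\nabla\theta}_g$ to $1$).} Expanding the square and applying Cauchy-Schwarz to $g(\nabla u,\nabla\theta)$ gives the elementary pointwise bound $\abs{\nabla u + \abs{\nabla u}\nabla\theta}\ge \abs{\nabla u}_g(1 - \abs{\nabla\theta}_g)$. Dividing by $\abs{\nabla u}_g$, integrating over $E_{\tau,g,t}$, and combining $\csc^2(\theta)\ge 1$ with Lemma \ref{lem:u-theta_alignment} and Step 1 produces
\[
\int_{E_{\tau,g,t}}(1 - \abs{\nabla\theta}_g)\,dV_g \le Cm^{-1/24}\int_{\Sp^3}\csc^2(\theta)\abs{\nabla u + \abs{\nabla u}\nabla\theta}\,dV_g \le Cm^{23/24}.
\]
Chebyshev's inequality then extracts a subset $E'\subset E_{\tau,g,t}$, whose complement inside $E_{\tau,g,t}$ has $g$-measure a positive power of $m$, on which $\abs{\nabla\theta}_g$ is pointwise within a small power of $m$ of $1$.

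\textbf{Step 3 (volume comparison via coarea).} Apply the coarea formula with level function $\theta$,
\[
|E_{\tau,g,t}|_g = \int_t^{\pi-t}\int_{E_{\tau,g,t}\cap\theta^{-1}(r)}\frac{dA_{h_r}}{\abs{\nabla\theta}_g}\,dr,
\]
where $h_r$ is the metric induced by $g$ on $\theta^{-1}(r)$, together with the analogous identity for $g_{\Sp^3}$ in which $\abs{\nabla\theta}_{g_{\Sp^3}}\equiv 1$ and $dA_{\sigma_r} = \sin^2(r)\,dA_{g_{\Sp^2}}$. The difference $|E_{\tau,g,t}|_g - |E_{\tau,g,t}|_{g_{\Sp^3}}$ then splits, on $E'$, into one contribution from the factor $1/\abs{\nabla\theta}_g - 1$ (controlled by Step 2) and one from $dA_{h_r}$ versus $\sin^2(r)\,dA_{g_{\Sp^2}}$. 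The latter is handled using the Hessian estimate \eqref{eq:mass_formula}, which together with the $L^1$ approximation $u\approx a(g)\cos(\theta) + \sigma(g)$ from Corollary \ref{cor:u_cos_alignment} forces the second fundamental form of $\theta^{-1}(r)$ to be close to its round-model value, so that $dA_{h_r}$ is close to the round area form. The contribution from $E_{\tau,g,t}\setminus E'$ is absorbed by its small $g$-measure.

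The principal obstacle is balancing several small parameters. The polar cutoff $t$ must be large enough that the singular weights $\csc(\theta)$, $\csc^2(\theta)$, and $\csc^3(\theta)$ appearing in the integral estimates of the preceding sections can be absorbed, but small enough to discard as little of the good region as possible. Each Chebyshev step trades pointwise accuracy for measure of an exceptional set, so the thresholds (including $\tau$ itself) must be jointly optimized. The specific exponent $1/48$ in the final estimate arises from this tuning.
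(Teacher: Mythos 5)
Your lower bound and Steps~1--2 are fine: the pointwise lower bound on $\abs{\nabla u}$ on $E_{\tau,g,t}$ follows from the definition of $E_{\tau,g}$, Lemma~\ref{lem:u_amplitude_lower_bound}, and $\sin(\theta)\ge\sin(t)$; and the elementary inequality $\abs{\nabla u+\abs{\nabla u}\nabla\theta}\ge\abs{\nabla u}(1-\abs{\nabla\theta})$ combined with \eqref{eq:u-theta1} does give $\int_{E_{\tau,g,t}}(1-\abs{\nabla\theta})\,dV_g\lesssim m^{23/24}$. However, Step~3 has a real gap, and the route you propose does not close it. After peeling off the $1/\abs{\nabla\theta}-1$ factor, the remaining term is exactly $\int_{E_{\tau,g,t}}\abs{\nabla\theta}\,dV_g-\abs{E_{\tau,g,t}}_{g_{\Sp^3}}$, and the claim that this is small \emph{because the Hessian estimate forces $dA_{h_r}$ to be close to $\sin^2(r)\,dA_{\Sp^2}$} is not justified. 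First, \eqref{eq:mass_formula} and \eqref{eq:u-theta2} control $\overline{\nabla}^2 u$ only in an integral sense, and relating this to the second fundamental form of $\theta$-level sets (rather than $u$-level sets) requires pointwise-on-most-slices conversions you have not supplied. Second, and more fundamentally, second-fundamental-form control bounds the \emph{derivative} of the slice area along the foliation, not the area itself; one must integrate from the pole, which is precisely where every estimate in the paper degenerates, so this does not pin down the area of $\theta^{-1}(r)\cap E_{\tau,g,t}$. Third, nothing in your outline anchors the argument to the correct normalization: the constants $a(g),\sigma(g)$ are a priori free, and the $L^1$ approximation $u\approx a(g)\cos\theta+\sigma(g)$ alone cannot certify that the slice areas match those of the \emph{unit} sphere rather than some rescaling.

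The ingredient you are missing is the numerical identity $8\pi=\int_{\Sp^3}\csc(\theta)\,dV_{g_{\Sp^3}}$, which the paper feeds through inequality \eqref{eq:csc2_norm_grad_u}. The actual proof avoids the coarea formula entirely: it uses \eqref{eq:csc2_norm_grad_u} and Lemma~\ref{lem:L2Grad_u_bound} to get $\int_{E_{\tau,g,t}}\csc^2(\theta)\abs{\nabla u}\,dV_g\le 8\pi+O(m^2)$; writes $\csc^2(\theta)\abs{\nabla u}=\csc(\theta)\bigl(\tfrac{\abs{\nabla u}}{\sin\theta}-a(g)\bigr)+a(g)\csc(\theta)$ and applies the defining inequality of $E_{\tau,g,t}$ together with Lemma~\ref{lem:u_amplitude_lower_bound} to pass to $\int_{E_{\tau,g,t}}\csc(\theta)\,dV_g\le 8\pi+(\text{errors})$; rewrites $8\pi=\int_{E_{\tau,g,t}}\csc(\theta)\,dV_{g_{\Sp^3}}+\int_{E_{\tau,g,t}^c}\csc(\theta)\,dV_{g_{\Sp^3}}$; and finally uses $\csc(\theta)\ge 1$ and $dV_g\ge dV_{g_{\Sp^3}}$ to bound $\abs{E_{\tau,g,t}}_g-\abs{E_{\tau,g,t}}_{g_{\Sp^3}}$ from above by $\int_{E_{\tau,g,t}}\csc(\theta)(dV_g-dV_{g_{\Sp^3}})$. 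The error terms are then controlled using $\csc(\theta)\le\csc(t)$ on $E_{\tau,g,t}$, H\"older, Corollary~\ref{cor:Ratio_grad_u-grad_sin-Poincare}, and the volume bound $V$, which is where the exponents $1/4$, $1/48$ are tuned. I would recommend replacing Step~3 with this slicing-free comparison.
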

\begin{proof}
    We begin with some preliminary observations. For convenience, we adopt the notation $m=\left\|\left(6-R_{g}\right)^{+}\right\|^{1/2}_{L^{2}(g)}$. It follows from Corollary \ref{cor:Ratio_grad_u-grad_sin-Poincare} that the compliment of $E_{\tau,g}$ satisfies
    \begin{equation}
        |E_{\tau,g}^c|_{g}\leq\frac{\Cr{const:Ratio_grad_u-grad_sin-Poincare}(\overline{m},\Lambda)}{\tau}\left\|\left(6-R_{g}\right)^{+}\right\|_{L^{2}(g)}^{1/2}=\Cr{const:Ratio_grad_u-grad_sin-Poincare}(\overline{m},\Lambda)\sqrt{m}.
    \end{equation}
    Our choices of $t$ and $\tau$ ensure that there is a constant $\Cr{const:smallanglecsc}(\overline{m})$ so that
    \begin{align}
            &|E_{\tau,g,t}^c|_{g}\leq\Cr{const:Ratio_grad_u-grad_sin-Poincare}(\overline{m},\Lambda)\sqrt{m}+\frac{8\pi}{3}m^{\frac{1}{8}}\label{eq:volEtaut1},
            \\
            &\tau\csc(t)\abs{\Sp^3}_{g}\leq \Cl{const:smallanglecsc}(\overline{m})m^{\frac{11}{24}}V\label{eq:tau_csct_bound},
    \end{align}
   where we have used the fact that $\overline{m}\leq 1$ to bound $\csc(t)$, the $g$-volume of $\Sp^3$ is bounded above by $V$, and $\abs{B^{\Sp}(\pm p,r)}_{g_{\Sp^3}}\leq\frac{4\pi}{3}r^3$ for any $r$. 
   
   Next, we prove a string of inequalities starting from Lemma \ref{lem:existence} which will eventually lead us to the desired estimate \eqref{eq:volEtaut0}. Combining inequality \eqref{eq:csc2_norm_grad_u} in Lemma \ref{lem:existence} with H{\"o}lder's inequality and Lemma \ref{lem:L2Grad_u_bound}, we see that
    
    % \begin{align}
    %     m&\geq \int_{\mathbb{S}^3}\csc^2\theta|\nabla u|dV_g-8\pi\\
    %     {}&\geq\int_{E_{\tau,t_0}}\csc^2\theta|\nabla u|dV_g-8\pi\\
    %     {}&\geq\int_{E_{\tau,t_0}}\csc\theta\left(\frac{|\nabla u|}{\sin\theta}-a(g)\right)dV_g+a(g)\int_{E_{\tau,t_0}}\csc\theta dV_g-8\pi\\
    %     {}&\geq \left(\int_{E_{\tau,t_0}}\csc\theta dV_{g}-8\pi\right)-\tau\int_{E_{\tau,t_0}}\csc\theta dV_g\\
    %     {}&\geq \int_{E_{\tau,t_0}}\csc\theta dV_{\mathbb{S}^3}-8\pi-\tau\int_{E_{\tau,t_0}}\csc\theta dV_g\\
    %     {}&=\int_{\mathbb{S}^3}\csc\theta dV_{\mathbb{S}^3}-8\pi-\int_{E_{\tau,t_0}^c}\csc\theta dV_{\mathbb{S}^3}-\tau\int_{E_{\tau,t_0}}\csc\theta dV_g\\
    %     {}&\geq-||\csc\theta||_{L^2(g_{\mathbb{S}^3})}(|E_{\tau}^c|^{1/2}_{\mathbb{S}^3}+|\left(\theta^{-1}(t_0.\pi-t_0)\right)^c|_{\mathbb{S}^3})-\tau\csc t_0|\mathbb{S}^3|_g
    % \end{align}
        \begin{align}
       \frac{\Cr{const:L2Grad_u_bound}(\overline{m})}{2}m^2\ge &\int_{\mathbb{S}^3}\csc^2(\theta)|\nabla u|dV_{g}-8\pi\geq\int_{E_{\tau,g,t}}\csc^2(\theta)|\nabla u|dV_{g}-8\pi.
        \end{align}
    Adding and subtracting $a(g)\int_{E_{\tau,g,t}}\csc(\theta)dV_{g}$ to the above gives
        \begin{align}
        \begin{split}
            \frac{\Cr{const:L2Grad_u_bound}(\overline{m})}{2}m^2&\ge\int_{E_{\tau,g,t}}\csc(\theta)\left(\frac{|\nabla u|}{\sin\theta}-a(g)\right)dV_{g}
            \\&+a(g)\int_{E_{\tau,g,t}}\csc(\theta )dV_{g}-8\pi.
        \end{split}
        \end{align}
    Then, using the definition of $E_{\tau,g,t}$ and adding and subtracting $\int_{E_{\tau,g,t}}\csc(\theta)dV_{g}$, we have
        \begin{align}
        \begin{split}
            \frac{\Cr{const:L2Grad_u_bound}(\overline{m})m^2}{2}\geq&\left(\int_{E_{\tau,g,t}}\csc(\theta )dV_g-8\pi\right)-\tau\int_{E_{\tau,g,t}}\csc(\theta) dV_{g}\\
            &-\left(1-a(g)\right)\int_{E_{\tau,g,t}}\csc(\theta)dV_{g}.
        \end{split}
        \end{align}
    Rewriting $8 \pi = \int_{\Sp^3}\csc(\theta)dV_{g_{\Sp^3}}=\int_{E_{\tau,g,t}}\csc(\theta)dV_{g_{\Sp^3}}+\int_{E^c_{\tau,g,t}}\csc(\theta)dV_{g_{\Sp^3}}$,  the last inequality becomes 
        \begin{align}
        \begin{split}
            \frac{\Cr{const:L2Grad_u_bound}(\overline{m})m^2}{2} {}&\ge\int_{E_{\tau,g,t}}\csc(\theta)\left(dV_{g}-dV_{\mathbb{S}^3}\right)-\int_{E_{\tau,g,t}^c}\csc(\theta) dV_{g_{\mathbb{S}^3}}\\
            {}&\quad-\tau\int_{E_{\tau,g,t}}\csc(\theta) dV_{g}-\left(1-a(g)\right)\int_{E_{\tau,g,t}}\csc(\theta)dV_{g}
            \end{split}\\
            \begin{split}{}&\geq|E_{\tau,g,t}|_g-|E_{\tau,g,t}|_{g_{\Sp^3}}-\int_{E^c_{\tau,g,t}}\csc(\theta)dV_{g_{\Sp^3}}-\tau\int_{E_{\tau,g,t}}\csc(\theta) dV_{g}\\
            {}&-\left(\Cr{const:effective_a_lower_bound_1}(\overline{m})m^{\frac{1}{12}}+\Cr{const:effective_a_lower_bound_2}(\overline{m},\Lambda)m^{\frac14}\right)\int_{E_{\tau,g,t}}\csc(\theta)dV_{g}
            \end{split}\label{2ndToLastEq}   
            %\begin{split}{}&\geq|E_{\tau,g,t}|_g-|E_{\tau,g,t}|_{g_{\Sp^3}}-\int_{E_{\tau,g,t}^c}\csc(\theta )dV_{g_{\mathbb{S}^3}}-\tau\csc (t)|\mathbb{S}^3|_g\\
            %{}&\quad-\left(1-a(g)\right)\csc(t)V
            %\end{split}\label{2ndToLastEq}   
        \end{align}
    where we have used $\csc(\theta)\ge 1$, $dV_g \ge dV_{\mathbb{S}^3}$, and applied the estimate of $a(g)$ from Lemma \ref{lem:u_amplitude_lower_bound}.
    Now using the fact that $\csc(\theta) \le \csc(t)$ on $E_{\tau,g,t}$, $|\Sp^3|_{g}\leq V$, and applying H\"{o}lder's inequality to the integral over $E^c_{\tau,g,t}$ appearing in \eqref{2ndToLastEq}, we find
        \begin{align}    
        \begin{split}
            \frac{\Cr{const:L2Grad_u_bound}(\overline{m})m^2}{2}{}&\geq|E_{\tau,g,t}|_g-|E_{\tau,g,t}|_{g_{\Sp^3}}-\|\csc(\theta)\|_{L^2(g_{\Sp^3})}|E_{\tau,g,t}^c|_{g_{\Sp^3}}^{\frac{1}{2}}-\tau\csc (t)|\mathbb{S}^3|_g\\
            {}&\quad-\left(\Cr{const:effective_a_lower_bound_1}(\overline{m})m^{\frac{1}{12}}+\Cr{const:effective_a_lower_bound_2}(\overline{m},\Lambda)m^{\frac14}\right)\csc(t)V\label{2ndToLastEq2}. 
            \end{split} 
            \end{align}
%     Now by H\"{o}lder's inequality to the last remaining integral above,
%         \begin{align}
%             \begin{split}
% \frac{\Cr{const:L2Grad_u_bound}(\overline{m})m^2}{2}{}&\geq|E_{\tau,g,t}|_g-|E_{\tau,g,t}|_{\Sp^3}-\|\csc(\theta)\|_{L^2(g_{\Sp^3})}|E_{\tau,g,t}^c|_{\mathbb{S}^3}^{\frac{1}{2}}-\tau\csc (t)|\mathbb{S}^3|_g \label{LastEq} 
%             \\&\quad-\left(\Cr{const:effective_a_lower_bound_1}(\overline{m})m^{\frac{1}{12}}+\Cr{const:effective_a_lower_bound_2}(\overline{m},\Lambda)m^{\frac14})\right)m^{-\frac{1}{24}},                 
%             \end{split}
%         \end{align}
   Finally, by \eqref{eq:volEtaut1} and \eqref{eq:tau_csct_bound} we find
        \begin{align}
        \begin{split}
            \frac{\Cr{const:L2Grad_u_bound}(\overline{m})m^2}{2}   {}&\geq|E_{\tau,g,t}|_g-|E_{\tau,g,t}|_{g_{\Sp^3}}\\
            {}&-\|\csc(\theta)\|_{L^{2}(g_{\Sp^3})}\left(\Cr{const:Ratio_grad_u-grad_sin-Poincare}(\overline{m},\Lambda)m^{\frac{1}{2}}+\frac{8\pi}{3}m^{\frac{1}{8}}\right)^{\frac{1}{2}}-\Cr{const:smallanglecsc}(\overline{m})Vm^{\frac{11}{24}}\\
            {}&-\left(\Cr{const:effective_a_lower_bound_1}(\overline{m})m^{\frac{1}{12}}+\Cr{const:effective_a_lower_bound_2}(\overline{m},\Lambda)m^{\frac14}\right)\Cr{const:smallanglecsc}(\overline{m})m^{-\frac{1}{24}}.
        \end{split}
        \end{align}
    The proof is complete upon noting that $\|\csc(\theta)\|_{L^{2}(g_{\Sp^3})}$ is finite, $m\leq 1$, and that the smallest power of $m$ appearing is $\tfrac{1}{48}$.
\end{proof}
    We now apply Lemma \ref{lem:vol_Etaut0} together with Theorem \ref{thm-VADB} to prove Theorem \ref{thm-Main}.

    \begin{proof}[Proof of Theorem \ref{thm-Main}]
        In light of Theorem \ref{thm-VADB}, it is sufficient to show
        \begin{equation}\label{eq-mainproof:volume}
            \lim_{i\rightarrow\infty}\abs{\Sp^3}_{g_{i}}=\abs{\Sp^3}_{g_{\Sp^3}}.
        \end{equation}
    Equip each $(\Sp^3,g_i)$ with a Llarull potential centered at some poles $\pm p_i$. Set $m_i=\left\|\left(6-R_{g_i}\right)^{+}\right\|_{L^{2}(g_i)}^{1/2}$, note that we may assume $m_i\leq1$, and choose $\tau_i=m_i^{1/2}$ and $t_i=m_i^{1/24}$ so that Lemma \ref{lem:vol_Etaut0} reads
        \begin{align} 
        \begin{split}
            |\Sp^3|_{g_i}-|E_{\tau_i,g_i,t_i}^c|_{g_i}&=|E_{\tau_i,g_i,t_i}|_{g_i}\\
            {}&\leq |E_{\tau_i,g_i,t_i}|_{g_{\Sp^3}}+\Cr{const:vol_Etaut0}(\overline{m},V,\Lambda) m_i^{1/24}\\
            {}&\leq |\Sp^3|_{g_{\Sp^3}}+\Cr{const:vol_Etaut0}(\overline{m},V,\Lambda) m_i^{1/24}.\label{eq:final1}
        \end{split}
        \end{align}
    By Lemma \ref{lem:small_theta_balls} applied with radii $r=t_i$, we are free to choose the poles $\pm p_i$ so that     
    \begin{align}
    \begin{split}
    |E_{\tau_i,g_i,t_i}^c|_{g_i}&\leq|E_{\tau_i,g_i}^c|_{g_i}+|B^{\Sp}(p_i,t_i)|_{g_i}+|B^{\Sp}(-p_i,t_i)|_{g_i}
    \\&\leq \Cr{const:Ratio_grad_u-grad_sin-Poincare}(\overline{m},\Lambda)m_i^{1/2} + \Cr{const:radius}(V) m_i^{1/8}.\label{eq:final2}
    \end{split}
    \end{align}
    Combining \eqref{eq:final1} and \eqref{eq:final2} yields 
    \begin{align}
    \begin{split}
     |\Sp^3|_{g_{\Sp^3}} &\leq   |\Sp^3|_{g_i}
     \\&\leq |\Sp^3|_{g_{\Sp^3}}+\Cr{const:vol_Etaut0}(\overline{m},V,\Lambda) m_i^{1/24}
      +\Cr{const:Ratio_grad_u-grad_sin-Poincare}(\overline{m},\Lambda)m_i^{1/2} + \Cr{const:radius}(V)m_i^{1/8}.       
    \end{split}
    \end{align}
    Taking the limit as $i\to\infty$ establishes \eqref{eq-mainproof:volume}.
    %Hence as $m_i \rightarrow 0$ we find that $\abs{\Sp^3}_{g_{i}}\rightarrow \abs{\Sp^3}_{g_{\Sp^3}}$. 
    %When combined with the hypotheses of Theorem \ref{thm-Main} and Theorem \ref{thm-VADB} we find volume preserving Sormani-Wenger Intrinsic Flat convergence to the round sphere, as desired.
    \end{proof}
    %%%%%%%%%%%%%%%%%%%%%%%%%%%%%%%%%%%%%%%%%%%%%%%%%%%%%%%%%%%%%%%%%
    \bibliographystyle{plain}
    \bibliography{bibliography}
\end{document}